\newtheorem{thm}{Theorem}[section]
\newtheorem{lem}[thm]{Lemma}
\newtheorem{cor}[thm]{Corollary}
\newtheorem{prop}[thm]{Proposition}
\newtheorem{obs}[thm]{Observation}
\newtheorem{rem}[thm]{Remark}
\newtheorem{exm}[thm]{Example}
\begin{document}

\begin{frontmatter}

\title{On automorphisms and fixing number of co-normal product of graphs}
\tnotetext[mytitlenote]{This research is supported by the UPAR Grants of United Arab Emirates University, Al Ain, UAE via Grant No. G00002590 and G00003271.}

\author[mymainaddress]{Shahid ur Rehman}
\ead{shahidurrehman1982@gmail.com}

\author[mysecondaryaddress]{Muhammad Imran \corref{mycorrespondingauthor}}
\ead{imrandhab@gmail.com}

\author[mymainaddress]{Imran Javaid}
\cortext[mycorrespondingauthor]{Corresponding author}
\ead{imran.javaid@bzu.edu.pk}

\address[mymainaddress]{Centre for advanced studies in Pure and Applied Mathematics,
	Bahauddin Zakariya University Multan, Pakistan.}
\address[mysecondaryaddress]{Department of Mathematical Sciences, United Arab Emirates University, Al Ain, United Arab Emirates.}

\begin{abstract}
	An automorphism of a graph describes its structural symmetry and the concept of fixing number of a graph is used for breaking its symmetries (except the trivial one). In this paper, we evaluate automorphisms of the co-normal product graph $G_1\ast G_2$ of two simple graphs $G_1$ and $G_2$ and give sharp bounds on the order of its automorphism group. We study the fixing number of $G_1\ast G_2$ and prove sharp bounds on it. Moreover, we compute the fixing number of the co-normal product of some families of graphs.
\end{abstract}

\begin{keyword}
\texttt Fixing set\sep automorphism\sep co-normal product of graphs
\MSC[2010]  05C25\sep 05C76
\end{keyword}

\end{frontmatter}

\section{Introduction}
In examining a given simple graph symmetries are used to describe its structure. Each symmetry of a simple graph is known as automorphism. Mathematically, a bijective mapping $\gamma$ from the vertex set of a simple graph $H$ to itself which satisfies the property that $\gamma(a)$ and $\gamma(b)$ are neighbors if and only if $a$ and $b$ are neighbors in $H$, is known as \emph{automorphism} of $H$. The notations $V(H)$ and $E(H)$ stands for the vertex set and the edge set of a simple graph $H$. The set $Aut(H)$ comprises on all automorphisms of a graph $H$ forms a group under the operation of composition of mappings. The notation $S_{|V(H)|}$ stands for the group of all permutations of $V(H)$. Clearly, $Aut(H)$ is a subgroup of $S_{|V(H)|}$. The concept of automorphisms has its roots in graph drawing \cite{abelson}, graph coloring \cite{ramani}, model checking \cite{donald}, programming \cite{ostro}, etc.

An automorphism $\gamma \in Aut(H)$ is said to fix a vertex $\acute{a}\in V(H)$, if $\gamma(\acute{a})=\acute{a}$.
The \emph{stabilizer} of a vertex $\acute{a}\in V(H)$ is a set which contain automorphisms of $H$ under which $\acute{a}$ is fixed, denoted by $Stab(\acute{a})$. A set of automorphisms of $H$ under which each vertex of a set $\mathcal{F}\subseteq V(H)$ remains fixed is called the stabilizer of $\mathcal{F}$ (i.e., $Stab({\mathcal{F}})=\bigcap\limits_{w\in \mathcal{F}}Stab(w)$). A vertex $\acute{a}\in V(H)$ is called a \emph{fixed vertex} of $H$, if $\gamma(\acute{a}) =\acute{a}$ for all $\gamma\in Aut(H)$. For a vertex $\acute{a}\in V(H)$, the set $\{\gamma(\acute{a}): \gamma\in Aut(H)\}$ is called the \emph{orbit} of $\acute{a}$ under $Aut(H)$, denoted by $\mathcal{O}(\acute{a})$. Two vertices in the same orbit are called \emph{similar} vertices. Recently, Dehmer et al. \cite{AMC} used the cardinality of vertex orbits to find zeros of graph polynomials. For basic concepts and terminology which are not given in this paper, please see \cite{a4,l}.

Since each automorphism of a graph $H$ describes a structural symmetry of $H$ so the fixing number of $H$ is a measure of its structural complexity. Harary \cite{FH} presented several methods of fixing a graph. Erwin and Harary \cite{FD} coined the term fixing set, and for the same idea Boutin \cite{DLB} used the term determining set. This notion has its application in manipulating objects \cite{KL}. For more results on fixing number and its related parameters of graphs, see \cite{JC,DLB,MA,CRG,a3,az,FMS,ft1,lw,lw1,lw2}.
A set $\mathcal{F}\subseteq V(H)$ with the property $Stab({\mathcal{F}})=\{id_H\}$ is called a \emph{fixing set} of $H$, where $id_{H}$ is the identity automorphism of $H$. The \emph{fixing number} of $H$ \cite{FD}, denoted by $fix(H)$ defined as the cardinality of possible minimum fixing set of $H$. A graph $H$ for which $fix(H)=0$ (i.e., $Aut(H)$ is trivial) is named as \emph{rigid graph} while $H$ is a \emph{non-rigid} graph if $fix(H)> 0$ (i.e., $Aut(H)$ is non-trivial). Results on the fixing number of well known families of graphs are given in \cite{JC}. All graphs considered in this paper are non-trivial, simple, finite and non-rigid, unless otherwise stated.



Until now, several products have been defined, see \cite{WI,C} for brief overview of product graphs. Ore \cite{l} presented a product graph named it \emph{Cartesian sum} of graphs that is also called the co-normal product of graphs \cite{t}. For a graph $G_1$ with vertex set $\{g^1_1,g^1_2,\ldots,g^1_{m}\}$ and a graph $G_2$ with vertex set $\{g^2_1,g^2_2,\ldots,g^2_{n}\}$, the $co$-$normal$ $product$ of $G_1$ and $G_2$ (the terminology we use) is a graph having the vertex set $V(G_1)\times V(G_2)= \{(g^1_i,g^2_j) : g^1_i\in V(G_1)$, $g^2_j\in V(G_2)\}$ and two distinct vertices $(g^1_i,g^2_j)$ and $(g^1_k,g^2_l)$ are adjacent if and only if $g^1_i$ is adjacent to $g^1_k$ in $G_1$ or $g^2_j$ is adjacent to $g^2_l$ in $G_2$, denoted by $G_1\ast G_2$. The study of product graphs is to get information about a product graph by using the knowledge of its component graphs.

A graph can be fixed by coloring a subset of its vertex set \cite{FH} as well as by using the distance between the vertices of the graph \cite{FD}.
Several properties and results are discussed in \cite{t,b,d,e,m,y} about coloring of the co-normal product of graphs. The metric dimension and strong metric dimension of $G_1\ast G_2$ were studied by Javaid et al. \cite{sss} and Kuziak et al. \cite{a0}, respectively. As a function of the order of the graph, Garijo et al. \cite{GD} analyzed the difference between its metric dimension and its fixing number. In this paper, we study the automorphism group and the fixing number of $G_1\ast G_2$.

We evaluate automorphisms of $G_1\ast G_2$ in the next section and give sharp bounds on the cardinality of its automorphism group. We also show that $G_1\ast G_2$ is a rigid graph (i.e., $Aut(G_1\ast G_2)$ is trivial) if and only if both $G_1$ and $G_2$ are indeed rigid and non-isomorphic ones. We depict conditions on $G_1$ and $G_2$ which yields that $Aut(G_1\ast G_2)= S_2$ (symmetric group). In Section 3, we study the fixing number of $G_1\ast G_2$ and prove that $max\{fix(G_1),fix(G_2)\}\leq fix(G_1\ast G_2)$ for any two graphs $G_1$ and $G_2$. We also identify conditions on $G_1$ and $G_2$ under which these bounds can be achieved. Further, we give formulae for the fixing number of the co-normal product of two path graphs, two star graphs, two complete multipartite graphs and a path with a null graph.

\section{Automorphisms of Co-normal Product}

For two graphs $G_1$ and $G_2$, $G_1\ast G_2\cong G_2\ast G_1$. In \cite{t}, Kuziak et al. characterized $G_1\ast G_2$ by using its connectivity. In \cite{a0}, the authors characterized $G_1\ast G_2$ by using its diameter. For each $(g^1_i, g^2_j)\in V(G_1\ast G_2)$ $deg(g^1_i, g^2_j)= |V(G_2)|deg(g^1_i)+(|V(G_1)|- deg(g^1_i))deg(g^2_j)$ and $N(g^1_i, g^2_j)=N(g^1_i)\times V(G_2)\cup(N(g^1_i))^{c}\times N(g^2_j)$, where $(N(g^1_i))^{c}=V(G_1)\setminus N(g^1_i)$, as given in \cite{sss}. If $deg(h)=|V(H)|-1$ for $h\in V(H)$, then $h$ is called a dominating vertex in $H$.
\begin{lem}\cite{sss}\label{r1} A vertex $(g^1_i, g^2_j)\in V(G_1\ast G_2)$ is a dominating vertex if and only if $g^1_i$ and $g^2_j$ are dominating in $G_1$ and $G_2$, respectively.
\end{lem}
We have the following observation from the definition of a rigid graph.
\begin{obs} A rigid graph has at most one
dominating vertex.
\end{obs}

In \cite{sss}, Javaid et al. proved the following result.
\begin{thm}\cite{sss}\label{twin}For any two distinct vertices $(g^1_i,g^2_j),(g^1_k, g^2_l)\in V(G_1\ast G_2)$, $N((g^1_i,g^2_j))$ $= N((g^1_k, g^2_l))$ if and only if
$N(g^1_i) = N(g^1_k)$ in $G_1$ and $N(g^2_j) = N(g^2_l)$ in $G_2$.
\end{thm}
In the next proposition, we provide restrictions on $G_1$ and $G_2$ whereby two different vertices of $G_1\ast G_2$ are true twins.
\begin{prop}\label{g4}Two distinct vertices $(g^1_i, g^2_j), (g^1_k, g^2_l)$ of $G_1\ast G_2$ are true twins, if and only if one of the following holds:\\
(1) For $g^1_i\neq g^2_k$ and $g^2_j\neq g^2_l$, the vertices $g^1_i$, $g^1_k$ and $g^2_j$, $g^2_l$ are dominating vertices of $G_1$ and $G_2$, respectively. \\
(2) For $g^1_i=g^1_k$ and $g^2_j\neq g^2_l$, vertex $g^1_i$ is a dominating vertex and $N[g^2_j]=N[g^2_l]$.\\
(3) For $g^1_i\neq g^1_k$ and $g^2_j= g^2_l$, vertex $g^2_j$ is a dominating vertex and $N[g^1_i]=N[g^1_k]$.
\end{prop}
\begin{proof}
(1) Suppose $(g^1_i, g^2_j), (g^1_k, g^2_l)$ are true
twins in $G_1\ast G_2$ and $g^1_i\neq g^1_k$, $g^2_j\neq g^2_l$. In order to prove that $g^1_i, g^1_k$ are dominating in $G_1$ and $g^2_j, g^2_l$ are dominating in $G_2$. Suppose on contrary that at least one of these vertices is not a dominating vertex. In particular, suppose $g^1_i$ is the only vertex which is not a dominating vertex in $G_1$, then Lemma \ref{r1} gives that $(g^1_i, g^2_j)$ is not a dominating vertex, a contradiction.

Converse arguments follows from Lemma \ref{r1}.\\
(2) Suppose $(g^1_i, g^2_j), (g^1_k, g^2_l)$ are true
twins in $G_1\ast G_2$ and $g^1_i=g^1_k$, $g^2_j\neq g^2_l$.
In order to prove that $g^1_i$ is a dominating vertex in $G_1$ and $g^2_j, g^2_l$ are true twins in $G_2$ suppose on contrary that $g^1_i$ is not a dominating vertex in  $G_1$ and $N[g^2_j]=N[g^2_l]$, then there exist at least one vertex say $g^1_r\in
V(G_1)\setminus\{g^1_i\}$ such that $g^1_r\notin N(g^1_i)$ and $(g^1_r, g^2_j)\notin N(g^1_i, g^2_j)$ but $(g^1_r, g^2_j)\in N(g^1_i,g^2_l)$, hence $N[(g^1_i, g^2_j)]\neq N[(g^1_i, g^2_l)]$, a contradiction. Now suppose that $g^1_i$ is a dominating vertex of $G_1$ and $N[g^2_j]\neq N[g^2_l]$, then by the definition of the co-normal product $N[(g^1_i, g^2_j)]\neq N[(g^1_i, g^2_l)]$, a contradiction. Hence, $g^1_i$ is a dominating vertex and $g^2_j, g^2_l$ are true twins in $G_2$.

Converse arguments follows from the definition of the co-normal product of graphs.\\
(3) The proof directly follows from the commutative property of the co-normal product of two graphs.
\end{proof}

Automorphisms of a graph provide basic knowledge about its symmetrical structure. For every $\gamma\in Aut(G_1)$ and $\eta\in Aut(G_2)$, mappings $\lambda_{G_1}$ and $\lambda_{G_2}$ on $G_1\ast G_2$ defined as $\lambda_{G_1}(g^1_i, g^2_j)=(\gamma(g^1_i), g^2_j)$ and $\lambda_{G_2}(g^1_i, g^2_j)=(g^1_i, \eta(g^2_j))$ are automorphisms of $G_1\ast G_2$. The next theorem gives automorphisms of $G_1\ast G_2$.

\begin{thm}\label{g1}Let $G_1$ and $G_2$ be two graphs and
$\lambda : V(G_1\ast G_2)\rightarrow V(G_1\ast G_2)$ be a mapping, then the following assertions holds:\\
(1) For every automorphism $\alpha$ of $G_1$ and $\beta$ of $G_2$, the mapping $\lambda$ defined as $\lambda(g^1_i, g^2_j)=(\alpha(g^1_i), \beta(g^2_j))$ for all $(g^1_i, g^2_j)\in V(G_1\ast G_2)$, is an automorphism of $G_1\ast G_2$.\\
(2) If $G_1\cong G_2$, then for every isomorphism $\alpha$ of $G_1$ to $G_2$ and $\beta$ of $G_2$ to $G_1$, the mapping $\lambda$
defined as $\lambda(g^1_i, g^2_j)=(\beta(g^2_j),\alpha(g^1_i))$ for all $(g^1_i, g^2_j)\in V(G_1\ast G_2)$,
is an automorphism of $G_1\ast G_2$.
\end{thm}
\begin{proof}
(1) Consider two distinct vertices $(g^1_i,g^2_j), (g^1_k, g^2_l)\in V(G_1\ast G_2)$ and suppose $(g^1_i,g^2_j)$ is adjacent to $(g^1_k, g^2_l)$, then $g^1_i$ is adjacent to $g^1_k$ in $G_1$ or $g^2_j$ is adjacent to $g^2_l$ in $G_2$. Since $\alpha$ and $\beta$ are automorphisms of $G_1$ and $G_2$, respectively, so $\alpha(g^1_i)$ is adjacent to $\alpha(g^1_k)$ or $\beta(g^2_j)$ is adjacent to $\beta(g^2_j)$. Thus $\lambda(g^1_i,g^2_j)$ is adjacent to $\lambda (g^1_k, g^2_l)$. Now, suppose $(g^1_i,g^2_j)$ is not adjacent to $(g^1_k, g^2_l)$, then $g^1_i$ is not adjacent to $g^1_k$ in $G_1$ and $g^2_j$ is not adjacent to $g^2_l$ in $G_2$. Clearly, $\lambda(g^1_i,g^2_j)$ is not adjacent to $\lambda (g^1_k, g^2_l)$, which gives that $\lambda(g^1_i, g^2_j)=(\alpha(g^1_i), \beta(g^2_j))$, is a homomorphism of $G_1\ast G_2$. Also, $\lambda$ is a bijective mapping because $\alpha$ and $\beta$ are bijective mappings. Hence, $\lambda(g^1_i, g^2_j)=(\alpha(g^1_i), \beta(g^2_j))$ is an automorphism of $G_1$.

(2) The proof follows from the fact that every isomorphism
is an automorphism when the labels are removed from $G_1$ and $G_2$.
\end{proof}
For an automorphism $\alpha\in Stab(g^1_i)$ and $\beta\in Stab(g^2_j)$, Theorem \ref{g1} (1) gives that mapping $\lambda(g^1_i, g^2_j)=(\alpha(g^1_i), \beta(g^2_j))$ is an automorphism of $G_1\ast G_2$ and $\lambda\in Stab(g^1_i, g^2_j)$. Hence, we have the following corollary:

\begin{cor}\label{g3}For any two graphs $G_1$ and $G_2$, $Stab(g^1_i)\times
Stab(g^2_j)\subseteq Stab(g^1_i, g^2_j)$ for each $(g^1_i, g^2_j)\in V(G_1\ast
G_2)$.
\end{cor}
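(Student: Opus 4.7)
The plan is to obtain the inclusion as an immediate consequence of Theorem \ref{g1}(1). First I would fix an arbitrary element $(\alpha, \beta) \in Stab(g^1) \times Stab(g^2)$, i.e.\ $\alpha \in Aut(G_1)$ with $\alpha(g^1) = g^1$ and $\beta \in Aut(G_2)$ with $\beta(g^2) = g^2$. Theorem \ref{g1}(1) supplies a map $\lambda_{(\alpha,\beta)} : V(G_1 \ast G_2) \to V(G_1 \ast G_2)$ defined by $\lambda_{(\alpha,\beta)}(g_i^1, g_j^2) = (\alpha(g_i^1), \beta(g_j^2))$ and guarantees it is an automorphism of $G_1 \ast G_2$. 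This is the embedding used implicitly to regard $Stab(g^1)\times Stab(g^2)$ as a subset of $Aut(G_1 \ast G_2)$.

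Evaluating at the distinguished vertex, $\lambda_{(\alpha,\beta)}(g^1, g^2) = (\alpha(g^1), \beta(g^2)) = (g^1, g^2)$, using both stabilizer hypotheses. Therefore $\lambda_{(\alpha,\beta)} \in Stab(g^1, g^2)$, and since $(\alpha, \beta)$ was arbitrary, $Stab(g^1) \times Stab(g^2) \subseteq Stab(g^1, g^2)$ as required.

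There is essentially no obstacle here, and the only care needed is notational: one must be explicit that the pair $(\alpha, \beta)$ is being identified with the induced coordinatewise automorphism $\lambda_{(\alpha, \beta)}$ furnished by Theorem \ref{g1}(1), since $Stab(g^1)\times Stab(g^2)$ is literally a subgroup of $Aut(G_1)\times Aut(G_2)$ rather than of $Aut(G_1 \ast G_2)$.
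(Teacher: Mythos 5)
Your proof is correct and follows exactly the paper's argument: apply Theorem \ref{g1}(1) to $(\alpha,\beta)\in Stab(g^1)\times Stab(g^2)$ to get the coordinatewise automorphism $\lambda$ of $G_1\ast G_2$, and observe that it fixes $(g^1,g^2)$. Your added remark on identifying the pair $(\alpha,\beta)$ with $\lambda_{(\alpha,\beta)}$ is a sensible clarification that the paper leaves implicit.
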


For a positive integer $k\geq 2$, we define a product graph obtained
from $G_1, G_2,\ldots,$ $G_k$ with $|V(G_i)|=m_i\geq 2$ for each $i$.
The graph having the vertex set $V(G_1)\times V(G_2)\times \ldots
\times V(G_k)$, where $(g_1, g_2, \ldots, g_k)$ is adjacent to
$(\acute{g_1},\acute{g_2}, \ldots, \acute{g_k})$ whenever $g_i$ is
adjacent to $\acute{g_i}$ in $G_i$ for some $1\leq i\leq k$, is
called the \emph{generalized co-normal product graph} of $G_1,
G_2,\ldots,$ $G_k$, denoted by $G_1\ast G_2\ast \ldots\ast G_k$. Let
$\mathcal{G}=G_1\ast G_2\ast\ldots\ast G_k$, where
$|V(G_i)|\geq 2$ for each $i$. For any
vertex $x\in V(G_i)$ for some $1\leq i\leq k$, we define a vertex
set $\mathcal{G}(x)=\{(g^1,
g^2,\ldots,g^{i-1},x,g^{i+1},\ldots,g^k)| g^j\in V(G_j), j\neq i\}$.
Note that, $\langle\mathcal{G}(x)\rangle\cong G_1\ast
G_2\ast\ldots\ast G_{i-1}\ast G_{i+1}\ast\ldots\ast G_k$ for each $x\in V(G_i)$. In particular for $k=2$, $\mathcal{G}=G_1\ast G_2$ is the usual co-normal product graph and for $k>2$, $\mathcal{G}$ is a natural generalization of
usual co-normal product, therefore its properties too. Theorem \ref{g1} can be generalized to the generalized co-normal product of graphs as:
\begin{thm}\label{g2} Let $\pi\in S_k$ be a permutation such that
$\psi_i: V(G_i)\rightarrow V(G_{\pi(i)})$ is an isomorphism for each
$1\leq i\leq k$. The mapping $\lambda: V(\mathcal{G}) \rightarrow
V(\mathcal{G})$, defined as $\lambda(g^1,g^2,\ldots,g^k)=
(\psi_{\pi^{-1}(1)}(g^{\pi^{-1}(1)}),
\psi_{\pi^{-1}(2)}(g^{\pi^{-1}(2)}),\ldots,
\psi_{\pi^{-1}(k)}(g^{\pi^{-1}(k)})$ is an automorphism of
$\mathcal{G}$.
\end{thm}
\begin{cor}\label{n2}Let $\mathcal{G}$ be the co-normal product
graph of $k\geq 2$ graphs. For every vertex $(g^1, g^2,\ldots,
g^k)\in V(\mathcal{G})$, we have $Stab(g^1)\times Stab(g^2)\times
\ldots\times Stab(g^k)\subseteq Stab(g^1, g^2,\ldots, g^k)$.
\end{cor}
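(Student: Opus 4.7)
The plan is to mimic the proof of Corollary \ref{g3} and invoke Corollary \ref{g2} to exhibit, for each tuple of stabilizing automorphisms of the factors, a corresponding automorphism of $\mathcal{G}$ that fixes $(g^1,g^2,\ldots,g^k)$. So the argument reduces to identifying the correct specialization of Corollary \ref{g2} and checking that it lands in $Stab(g^1,g^2,\ldots,g^k)$.

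Concretely, I would take an arbitrary element $(\alpha_1,\alpha_2,\ldots,\alpha_k) \in Stab(g^1)\times Stab(g^2)\times\cdots\times Stab(g^k)$, so that each $\alpha_i\in Aut(G_i)$ satisfies $\alpha_i(g^i)=g^i$. Applying Corollary \ref{g2} with $\pi$ the identity permutation in $S_k$ and $\psi_i=\alpha_i$ for each $i$ produces the automorphism $\lambda$ of $\mathcal{G}$ given by
$$\lambda(g^1,g^2,\ldots,g^k)=(\alpha_1(g^1),\alpha_2(g^2),\ldots,\alpha_k(g^k)).$$
Since each $\alpha_i$ fixes $g^i$, the tuple $(g^1,g^2,\ldots,g^k)$ is itself a fixed point of $\lambda$, hence $\lambda\in Stab(g^1,g^2,\ldots,g^k)$. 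Letting $(\alpha_1,\ldots,\alpha_k)$ range over the product of the individual stabilizers gives the desired inclusion, where we identify the tuple $(\alpha_1,\ldots,\alpha_k)$ with the automorphism $\lambda$ of $\mathcal{G}$ it induces.

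There is no substantial obstacle here. The corollary is essentially an unpacking of Corollary \ref{g2} in the special case where the permutation on the factors is trivial and each $\psi_i$ already stabilizes the chosen vertex $g^i$. If anything, the only delicate point is purely notational: making sure that the identification $(\alpha_1,\ldots,\alpha_k)\mapsto \lambda$ is well-defined on the whole direct product $Stab(g^1)\times\cdots\times Stab(g^k)$, but this is immediate because coordinate-wise action is independent in each factor. Consequently, the proof I envision is only a few lines long, essentially saying \emph{``apply Corollary \ref{g2} with $\pi=\mathrm{id}$ and $\psi_i=\alpha_i$''}.
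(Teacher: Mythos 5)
Your proof is correct and matches the paper's intended argument: the paper leaves this corollary unproved as an immediate consequence of Corollary \ref{g2}, exactly parallel to how Corollary \ref{g3} is deduced from Theorem \ref{g1}(1), and your specialization ($\pi=\mathrm{id}$, $\psi_i=\alpha_i$) is precisely that deduction.
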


For the co-normal product graph $\mathcal{G}=G_1\ast G_2$, we define the vertex sets $\mathcal{G}(g^2_j)=\{(g^1_i,g^2_j)|g^1_i\in V(G_1)\}$ and $\mathcal{G}(g^1_i)=\{(g^1_i,g^2_j)|g^2_j\in V(G_2)\}$ for each $g^2_j\in V(G_2)$ and $g^1_i\in V(G_1)$. Moreover, $\langle\mathcal{G}(g^2_j)\rangle\cong G_1$ and $\langle\mathcal{G}(g^1_i)\rangle\cong G_2$ for each $g^2_j\in V(G_2)$ and $g^1_i\in V(G_1)$. Remark \ref{rem}, immediately follows from the fact that every automorphism is an isometry \cite{big}, i.e., for every $\alpha\in Aut(G_1)$ and $x,y\in V(G_1)$, $d(x,y)=d(\alpha(x), \alpha(y))$.
\begin{rem}\label{rem}For every automorphism $\alpha\in Aut(G)$ and every subgraph $H$ of $G$, $\langle\alpha(V(H))\rangle\cong H$.
\end{rem}
Let $id_{G_1}$ and $id_{G_2}$ denotes the identity automorphism of $G_1$ and $G_2$, respectively. The automorphism $\lambda(g^1_i,g^2_j)=(\alpha(g^1_i), \beta(g^2_j))$ of $G_1\ast G_2$ as defined in Theorem \ref{g1}(1) is called a $rotation$ about $G_1$ (or $G_2$) if $\alpha\neq id_{G_1}$ and $\beta=id_{G_2}$ or $\alpha= id_{G_1}$ and $\beta\neq id_{G_2}$. For $G_1\cong G_2$, automorphism $\lambda(g^1_i,g^2_j)=(\beta(g^2_j), \alpha(g^1_i))$ as defined in Theorem \ref{g1}(2) exists and is called a $flip$ of $G_1\ast G_2$. For two distinct vertices $u,v\in V(H)$, an automorphism $\alpha\in Aut(H)$ such that $\alpha(u)=v$, $\alpha(v)=u$ and $\alpha(x)=x$ for all $x\in V(H)\setminus \{u,v\}$ is called a $(u, v)-interchange$. All results given in this paper for $G_1\ast G_2$ are also hold for $G_2\ast G_1$ due to the commutative property of co-normal product
and the results can be generalized to the generalized co-normal
product graph of $k> 2$ graphs. For a fixed vertex $g^1_k\in V(G_1)$ and any $\psi\in Aut(G_2)$, we define a mapping $\lambda$ on $V(G_1\ast G_2)$ as:
\begin{equation}
\lambda(g^1_i,g^2_j)\in V(G_1\ast G_2)\rightarrow \left\{
            \begin{array}{ll}
          (g^1_k,\psi(g^2_j))&  \,\,\,\,\,\,\,\,\,\,\,\,\  if\,\,\,\ i=k,\\
           (g^1_i, g^2_j)&  \,\,\,\,\,\,\,\,\,\,\,\,\  otherwise.
            \end{array}
             \right.
\end{equation}
Note that if $\psi=id_{G_2}$, then $\lambda$ is a trivial automorphism of $G_1\ast G_2$. Also, mapping $\lambda$ as defined in (1) is a non-trivial automorphism for $P_3\ast P_4$ as shown in Figure 1 for $k=2$ and $id_{G_2}\neq \psi\in Aut(P_4)$.
\begin{figure}[h]
        \centerline
        {\includegraphics[width=08cm]{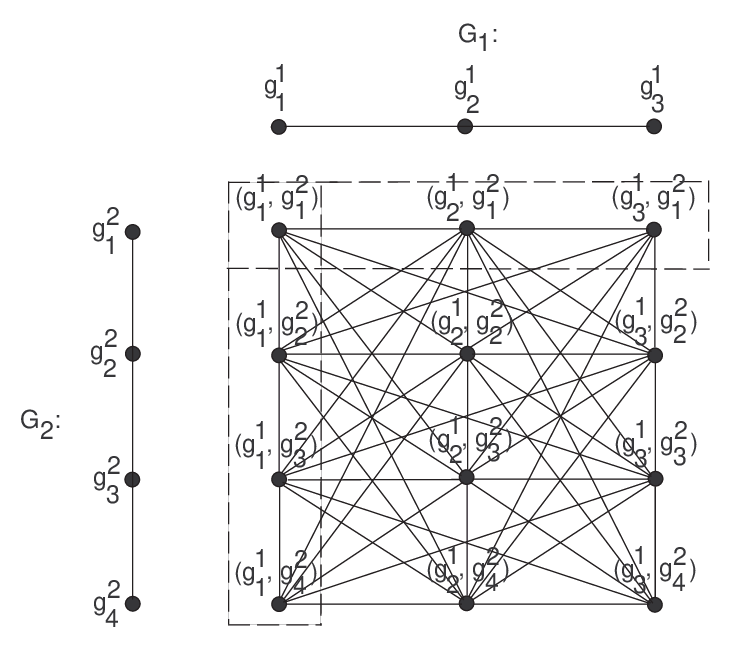}}
        \label{p}
        \caption{The co-normal product graph of $G_1=P_3$ and $G_2=P_4$.}\label{p}
\end{figure}
The proof of next proposition is straightforward.
\begin{prop}\label{g5}For each dominating vertex $g^1_k$ of $G_1$ and for every non-trivial automorphism $\psi$ of $G_2$, mapping $\lambda$ as defined in (1) built from $\psi$ is a non-trivial automorphism of $G_1\ast G_2$.
\end{prop}

Note that $\lambda$ as defined in (1) is not an automorphism for graph $P_4\ast P_4$ for any $k$ as shown in Figure 2. In next theorem, we give conditions on $G_1$ and $G_2$ so that mapping $\lambda$ as defined in (1) is an automorphism of $G_1\ast G_2$ when $g^1_k$ is not a dominating vertex.
\begin{thm}\label{g6}Let $g^1_k\in V(G_1)$ is not a dominating vertex  and $\psi\in Aut(G_2)$ is a non-trivial automorphism  of $G_2$, then $\lambda$ as defined in (1) built from $\psi$ is a non-trivial automorphism of $G_1\ast G_2$ if and only if $N(g^2_j)=N(\psi(g^2_j))$ in $G_2$ for each $g^2_j\in V(G_2)$.
\end{thm}
\begin{proof}Let $g^1_k\in V(G_1)$ and $\psi\in Aut(G_2)$ is a non-trivial automorphism and $\lambda$ as defined in (1) built from $\psi$ is a non-trivial automorphism
of $G_1\ast G_2$. Since $\psi$ is non-trivial, therefore there exist $g^2_j\neq g^2_l\in V(G_2)$ such that $\psi(g^2_j)=g^2_l$. Suppose $N(g^2_j)\neq N(\psi(g^2_j))$ in $G_2$, then there exist at least one vertex $g^2_s\in
V(G_2)$ such that $g^2_s \in N(g^2_j)$ and $g^2_s \notin
N(\psi(g^2_j))$ or $g^2_s \notin N(g^2_j)$ and $g^2_s \in
N(\psi(g^2_j))$. Also, $g^1_k\in V(G_1)$ is not a dominating vertex so there exist at least one vertex $g^1_i$ such that $g^1_i\notin N(g^1_k)$. The definition of $\lambda$ gives that $\lambda((g^1_i,g^2_s))=(g^1_i,g^2_s)$ and $\lambda((g^1_k,g^2_j))= (g^1_k,g^2_l)$, moreover, $d(\lambda((g^1_i,g^2_l)),\lambda((g^1_k,g^2_j)))> 1$ and $d((g^1_i,g^2_s), (g^1_k,g^2_j))=1$, which gives that $\lambda$ is not an isometry, a contradiction. Hence, $N(g^2_j)=N(\psi(g^2_j))$ for each $g^2_j\in V(G_2)$.

Conversely, suppose $N(g^2_j)= N(\psi(g^2_j))$ for each $g^2_j\in V(G_2)$. Theorem \ref{twin} gives that for each $g^1_i\in
V(G_1)$, vertices $(g^1_i,g^2_j)$ and $(g^1_i,\psi(g^2_j))$ are
also false twins in $G_1\ast G_2$. Hence, $\lambda$ is a non-trivial automorphism of $G_1\ast G_2$.
\end{proof}
Using the structure of co-normal product of two graphs and Theorem
\ref{g6}, we have the following corollary.
\begin{cor}\label{g7}
For any two graphs $G_1$ and $G_2$, $Aut(G_1\ast G_2)$ has a
$((g^1_i, g^2_j), (g^1_k, g^2_l))-interchange$ if and only if one of the following holds:\\
(1) $g^1_i=g^1_k$, $g^2_j\neq g^2_l$ and $N(g^2_j)=N(g^2_l)$.\\
(2) $g^1_i\neq g^1_k$, $g^2_j= g^2_l$ and $N(g^1_i)=N(g^1_k)$.\\
(3) $g^1_i=g^1_k$, $g^2_j\neq
g^2_l$ with $g^1_i$ dominating in $G_1$ and
$N[g^2_j]=N[g^2_l]$.\\
(4) $g^1_i\neq g^1_k$, $g^2_j= g^2_l$ with $g^2_j$ dominating in
$G_2$ and
$N[g^1_i]=N[g^1_k]$.\\
(5) $g^1_i\neq g^1_k$, $g^2_j\neq g^2_l$ with $N(g^1_i)=N(g^1_k)$ and
$N(g^2_j)=N(g^2_l)$.\\
(6) $g^1_i\neq g^1_k$, $g^2_j\neq g^2_l$ with $g^1_i,g^1_k$ are dominating in $G_1$ and $g^2_j, g^2_l$
are dominating in $G_2$.
\end{cor}

The definition of co-normal product of graphs gives that $Aut(G_1\ast G_2)$ is isomorphic to $S_{|V(G_1\ast G_2)|}$ (symmetric group) if and only if both $G_1$ and $G_2$ are complete or null graphs. Moreover, Theorem \ref{g1} (1) gives that $Aut(G_1)\times Aut(G_2)\subseteq
Aut(G_1\ast G_2)$ for any two graphs $G_1$ and $G_2$. Hence, we have the following bounds:
\[|Aut(G_1)||Aut(G_2)|\leq |Aut(G_1\ast G_2)|\leq (|V(G_1)||V(G_2)|)!\]

In the next theorem, we give conditions on $G_1$ and $G_2$ so that $G_1\ast G_2$ is a rigid graph and the lower bound is attained.

\begin{thm}\label{f1} For any two graphs $G_1$ and $G_2$, graph $G_1\ast G_2$ is a rigid graph if and only if $G_1$ and $G_2$ are non-isomorphic rigid graphs.
\end{thm}
\begin{proof} Suppose $G_1\ast G_2$ is a rigid graph. In order to prove that $G_1$ and $G_2$ are both rigid graphs, suppose on contrary that $G_1$ is non-rigid and $\alpha\in Aut(G_1)$ is a non-trivial automorphism, then $\bar{\lambda}$ built from $\alpha$ and $\beta=id_{G_2}$ described in Theorem \ref{g1} (1) is a non-trivial automorphism of $G_1\ast G_2$, a contradiction. A similar argument holds, when $G_2$ is a non-rigid graph. Hence, $G_1$ and $G_2$ must be rigid graphs. Now, suppose $G_1\cong G_2$ and $\alpha$ is an isomorphism from $G_1$ to $G_2$ and $\beta$ is an isomorphism from $G_2$ to $G_1$, then there exists a non-trivial automorphism $\bar{\lambda}\in Aut(G_1\ast G_2)$ built from $\alpha$ and $\beta$ as described in Theorem \ref{g1} (2), a contradiction. Hence $G_1$ and $G_2$ are non-isomorphic graphs.

Conversely, suppose that $G_1$ and $G_2$ are non-isomorphic rigid graphs and $\lambda\in Aut(G_1\ast G_2)$ such that $\lambda(g^1, g^2)=(g^1_i, g^2_j)$ for some $(g^1, g^2)\in V(G_1\ast G_2)$. Since $G_1\ncong G_2$ so $\lambda$ is not a flip and $\lambda$ is not an interchange because $G_1$ and $G_2$ has no dominating vertices. Now to prove that $\lambda$ is trivial, we study the following cases:\\
Case 1) Suppose $g^1=g^1_i$ and $g^2\neq g^2_j$. Since $\lambda$ is not a flip and $\langle\lambda(\mathcal{G}(g^1))\rangle\cong G_2$ so there exists a non-trivial automorphism $\beta$ of $G_2$ such that $\beta(g^2)=g^2_j$ which contradicts that $G_2$ is a rigid graph.\\
Case 2) Suppose $g^1\neq g^1_i$ and $g^2=g^2_j$. Since $\lambda$ is not a flip and $\langle\lambda(\mathcal{G}(g^1))\rangle\cong G_2$ so there exists a non-trivial automorphism $\alpha$ of $G_1$ such that $\alpha(g^1)=g^1_i$ which contradicts that $G_1$ is a rigid graph.\\
Case 3) Suppose $g^1\neq g^1_i$ and $g^2\neq g^2_j$. Since $\lambda$ is not a flip and $\langle\lambda(\mathcal{G}(g^2))\rangle\cong G_1$ and $\langle\lambda(\mathcal{G}(g^1))\rangle\cong G_2$ so there exists a non-trivial automorphism $\alpha$ of $G_1$ such that $\alpha(g^1)=g^1_i$ and a non-trivial automorphism $\beta$ of $G_2$ such that $\beta(g^2)=g^2_j$ which contradicts that $G_1$ and $G_2$ are rigid graphs.\\

Concluding all above cases, we have $\lambda(g^1,g^2)=(g^1,g^2)$ for all $(g^1,g^2)\in V(G_1\ast G_2)$ i.e., $\lambda$ is trivial which completes the proof.
\end{proof}

In the next two theorems, we give conditions on $G_1$ and $G_2$ so that $Aut(G_1\ast G_2)=Aut(G_1)\times Aut(G_2)$ when at least one of $G_1$ and $G_2$ is a non-rigid graph.
\begin{thm}\label{nm1}For a rigid graph $G_1$ and a non-rigid graph $G_2$,
$Aut(G_1\ast G_2)=Aut(G_1)\times Aut(G_2)$ if and only if $G_1$ has no
dominating vertex and $G_2$ has no false twins.
\end{thm}
\begin{proof}Suppose $Aut(G_1\ast G_2)= Aut(G_1)\times Aut(G_2)$. In order to prove that $G_1$ has no dominating vertex suppose on contrary that $G_1$ has a dominating vertex $g^1_k\in V(G_1)$ and $\bar{\psi}\in Aut(G_2)$ is non-trivial, then mapping $\bar{\lambda}\in Aut(G_1\ast G_2)$ built from $\bar{\psi}$ as described in Proposition \ref{g5} is a non-trivial automorphism of $G_1\ast G_2$ such that $\bar{\lambda}\notin Aut(G_1)\times Aut(G_2)$, a contradiction. Hence, $G_1$ has no dominating vertex. Now, suppose $g^2_j\neq g^2_l\in V(G_2)$ are false twins in $G_2$, then Corollary \ref{g7} (1) gives that for each $g^1_i\in V(G_1)$ there
exists a $((g^1_i, g^2_j), (g^1_i, g^2_l))-interchange$ in $Aut(G_1\ast G_2)$ which does not belong to $Aut(G_1)\times Aut(G_2)$, a contradiction. Hence, $G_2$ has no false twins.

Conversely, suppose $G_1$ is a rigid graph without dominating vertex and $G_2$ is without false twins. Theorem \ref{g1} (1) gives that $Aut(G_1)\times Aut(G_2)\subseteq Aut(G_1\ast G_2)$. Consider  $\lambda\in Aut(G_1\ast G_2)$ since $G_1$ and $G_2$ are non-isomorphic graphs so $\lambda$ is not a flip and Corollary \ref{g7} gives that $\lambda$ is not an interchange because $G_1$ and $G_2$ has no false twins. Further, $G_1$ has no dominating vertex so for every non-trivial automorphism $\psi\in Aut(G_2)$, $\lambda$ does not have the form described in Proposition \ref{g5} built from $\psi$. Suppose $\lambda(g^1_i, g^2_j)=(g^1_k, g^2_l)$ such that $g^1_i\neq g^1_k$, then there exists a non-trivial automorphism $\alpha$ of $G_1$ for which $\alpha(g^1_i)=g^1_k$, a contradiction to the fact that $G_1$ is a rigid graph. Hence $\lambda(g^1_i, g^2_j)=(g^1_i, g^2_l)$ i.e., $\lambda$ is constant on the first coordinate. Now suppose $\lambda\notin Aut(G_1)\times Aut(G_2)$ and $g^2_j$ is similar to $g^2_l$ in $G_2$, then there exists a vertex $(g^1_i, g^2_r)\in \mathcal{G}(g^1_i)$ such that $\lambda(g^1_i, g^2_r)=(g^1_i, g^2_t)$ and $G_2$ has no automorphism $\beta$ for which $\beta(g^2_j)=g^2_l$ and $\beta(g^2_r)=g^2_t$ which yields that $\langle\lambda(\mathcal{G}(g^1_i))\rangle\ncong G_2$ but $\langle\mathcal{G}(g^1_i)\rangle\cong G_2$, a contradiction to the fact that $\lambda$ is an automorphism by Remark \ref{rem}. Now suppose that $g^2_j$ is not similar to $g^2_l$ in $G_2$, then $\langle\lambda(\mathcal{G}(g^1_i))\rangle\ncong G_2$ because $\lambda$ is a bijective mapping, a contradiction to the fact that $\lambda$ is an automorphism by Remark \ref{rem}.
\end{proof}

\begin{thm}\label{g8}For any two non-rigid, non-isomorphic graphs $G_1$ and $G_2$, $Aut(G_1\ast G_2)$ $= Aut(G_1)\times Aut(G_2)$ if and only if neither $G_1$ nor $G_2$ has dominating vertex and false twins.
\end{thm}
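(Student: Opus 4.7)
The plan is to treat the two directions of this biconditional separately. The contrapositive of the ``only if'' direction is a direct construction that invokes Proposition \ref{g5} and the false-twin analysis, while the ``if'' direction is where the genuine work lies.

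For the contrapositive of ``only if,'' I assume that $G_1$ has either a dominating vertex or a pair of false twins (the case for $G_2$ is symmetric). If $g_k^1 \in V(G_1)$ is dominating, then since $G_2$ is non-rigid I can choose a non-identity $\psi \in Aut(G_2)$, and Proposition \ref{g5} furnishes an automorphism $\lambda$ of $G_1 \ast G_2$ acting as $\psi$ on $\mathcal{G}(g_k^1)$ and as the identity elsewhere. A would-be factorization $\lambda = (\alpha, \beta)$ would force $\beta = i_{G_2}$ from its identity action on any other fiber, contradicting $\psi \neq i_{G_2}$. If instead $g_i^1, g_k^1$ are false twins of $G_1$, then Observation 2 gives $N((g_i^1, g_j^2)) = N((g_k^1, g_j^2))$ for every $g_j^2$, so the transposition swapping exactly one such pair is an automorphism of $G_1 \ast G_2$; it cannot factor because any $\alpha$ with $\alpha(g_i^1) = g_k^1$ would move \emph{every} vertex of $\mathcal{G}(g_i^1)$, not just one.

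For the ``if'' direction, the inclusion $Aut(G_1) \times Aut(G_2) \subseteq Aut(G_1 \ast G_2)$ is immediate from Theorem \ref{g1}, so I focus on the reverse. Given $\lambda \in Aut(G_1 \ast G_2)$ I would proceed in three steps: (a) show that $\lambda$ maps each $G_1$-fiber $\mathcal{G}(g_i^1)$ bijectively onto another $G_1$-fiber, inducing a permutation $\alpha$ of $V(G_1)$; (b) verify $\alpha \in Aut(G_1)$ by checking that adjacency between fibers in $G_1 \ast G_2$ mirrors adjacency in $G_1$; (c) letting $\beta_i \in Aut(G_2)$ be the second-coordinate part of $\lambda$ on $\mathcal{G}(g_i^1)$, show that all $\beta_i$ coincide. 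Step (c) will follow by a Theorem \ref{g6}-style argument: if $\beta_i \neq \beta_k$ for some $i,k$, then examining adjacencies between $(g_i^1,v)$ and $(g_k^1,v')$ under $\lambda$ forces $v$ and $\beta_k\beta_i^{-1}(v)$ to be false twins in $G_2$ for every $v$ moved by $\beta_k\beta_i^{-1}$, contradicting the twin-free hypothesis on $G_2$.

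The main obstacle is step (a) of the reverse direction, namely showing that $\lambda$ cannot scatter a single fiber across several. My approach here is to exploit Corollary \ref{g7}: under the no-twin, no-dominating hypothesis on both factors, Corollary \ref{g7} rules out every $((g_i^1, g_j^2), (g_k^1, g_l^2))$-interchange in $Aut(G_1 \ast G_2)$, eliminating the small local symmetries that could otherwise scatter a fiber. Combined with the identity $N(g_i^1, g_j^2) \triangle N(g_i^1, g_l^2) = (N(g_i^1))^c \times (N(g_j^2) \triangle N(g_l^2))$ that follows from Observation 2, this lets me characterize ``lying in the same $G_1$-fiber'' as a structural invariant of $G_1 \ast G_2$ preserved by every automorphism. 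The subsidiary possibility that $\lambda$ maps a $G_1$-fiber onto a $G_2$-fiber is then ruled out by $G_1 \not\cong G_2$, since the induced subgraph on a $G_1$-fiber is isomorphic to $G_2$ while on a $G_2$-fiber it is isomorphic to $G_1$.
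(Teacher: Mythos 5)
Your handling of the ``only if'' direction is fine, and in fact cleaner than the paper's: the transposition of a false-twin pair inside one fiber, and the Proposition \ref{g5} automorphism attached to a dominating vertex, are each visibly not of the form $(\alpha,\beta)$, which is all that is required. The problem is the converse, specifically your step (a), which you correctly identify as the main obstacle but never actually close. You claim that Corollary \ref{g7} (no interchanges) together with the identity $N(g_i^1,g_j^2)\triangle N(g_i^1,g_l^2)=(N(g_i^1))^c\times(N(g_j^2)\triangle N(g_l^2))$ ``lets you characterize lying in the same $G_1$-fiber as a structural invariant,'' but no such characterization is exhibited, and ruling out transpositions says nothing about automorphisms of large support that scatter a fiber.

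This is not a repairable omission: under exactly the stated hypotheses the fiber partition need not be preserved. Take $G_1=C_5\ast C_5$ and $G_2=C_5$. Both are non-rigid and non-isomorphic, and a direct check (degrees, and the neighborhood formula of Observation 2) shows neither has false twins or a dominating vertex. Since the co-normal product is associative, $G_1\ast G_2=C_5\ast C_5\ast C_5$, and the cyclic coordinate shift $(a,b,c)\mapsto(c,a,b)$ is an automorphism (an instance of Corollary \ref{g2} with $\pi$ a $3$-cycle). It sends the $G_1$-fiber $\{(a,b)\}\times V(C_5)$ to $\{(c,a,b):c\in V(C_5)\}$, which is neither a $G_1$-fiber nor a $G_2$-fiber, so it does not lie in $Aut(G_1)\times Aut(G_2)$; equivalently, via $(G_1\ast G_2)^c=G_1^c\boxtimes G_2^c$ one gets $|Aut(G_1\ast G_2)|=|Aut(C_5)\wr S_3|=6000$ while $|Aut(G_1)\times Aut(G_2)|=2000$. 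So step (a) — and with it the converse of the theorem as stated — fails; any correct version needs an extra hypothesis excluding such mixing of factors (e.g.\ a primality condition on $G_1^c$, $G_2^c$ with respect to the strong product). For comparison, the paper's own converse argument makes the same unjustified leap: it assumes that any $\lambda\notin Aut(G_1)\times Aut(G_2)$ must send some vertex to one with a non-similar coordinate and then reads off induced fibers, which is precisely the point your proposal also leaves open.
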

\begin{proof}Let $Aut(G_1\ast G_2)= Aut(G_1)\times Aut(G_2)$, we will prove that neither $G_1$ nor $G_2$ has dominating vertex or false twins. First, suppose that $G_1$ has a
dominating vertex say $g^1_k$, then for every non-trivial automorphism $\bar{\psi}\in Aut(G_2)$, mapping $\bar{\lambda}$ built from $\bar{\psi}$ described in Proposition \ref{g5} is a non-trivial automorphism of $G_1\ast G_2$ such that $\lambda\notin Aut(G_1)\times Aut(G_2)$. Now, suppose $G_1$ or $G_2$ has false twins. In particular, suppose $g^2_j\neq g^2_l\in V(G_2)$ are false twins, then Corollary \ref{g7} (1) gives that for each $g^1_i\in V(G_1)$ there
exists a $((g^1_i, g^2_j), (g^1_i, g^2_l))-interchange$ in $Aut(G_1\ast G_2)$ which does not belong to $Aut(G_1)\times Aut(G_2)$, a contradiction. Hence, neither $G_1$ nor $G_2$ has false twins and dominating vertices.

Conversely, suppose $G_1$ and $G_2$ are non-isomorphic, non-rigid graphs without false twins and dominating vertices. Theorem \ref{g1} (1) gives that $Aut(G_1)\times Aut(G_2)\subseteq Aut(G_1\ast G_2)$. Now to prove that $Aut(G_1\ast G_2)\subseteq Aut(G_1)\times Aut(G_2)$ assume contrary that there exists $\lambda\in Aut(G_1\ast G_2)$ such that $\lambda\notin Aut(G_1)\times Aut(G_2)$. Now $G_1$ and $G_2$ are non-isomorphic graphs so $\lambda$ is not a flip and Corollary \ref{g7} gives that $\lambda$ is not an interchange because $G_1\ast G_2$ has no false twins. Further, for each non-trivial automorphism $\psi\in Aut(G_2)$, $\lambda$ does not have the form described in Proposition \ref{g5} built from $\psi$. A similar argument holds for each non-trivial automorphism $\phi\in Aut(G_1)$ due to the commutative property of co-normal product of $G_1$ and $G_2$. Suppose $\lambda(g^1_i, g^2_j)=(g^1_k, g^2_l)$ then we study two cases as follows:\\
Case 1) Suppose $g^1_i$ is similar to $g^1_k$ in $G_1$ and $g^2_j$ is similar to $g^2_l$ in $G_2$. We have two possibilities:\\
(1) Since $\lambda\notin Aut(G_1)\times Aut(G_2)$ so there exists a vertex $(g^1_i, g^2_r)\in \mathcal{G}(g^1_i)$ such that $\lambda(g^1_i, g^2_r)=(g^1_k, g^2_t)$ and $G_2$ has no automorphism $\beta$ for which $\beta(g^2_j)=g^2_l$ and $\beta(g^2_r)=g^2_t$ which yields that $\langle\lambda(\mathcal{G}(g^1_i))\rangle\ncong G_2$ but $\langle\mathcal{G}(g^1_i)\rangle\cong G_2$, a contradiction to the fact that $\lambda$ is an automorphism by Remark \ref{rem}.\\
(2) Since $\lambda\notin Aut(G_1)\times Aut(G_2)$ so there exists a vertex $(g^1_r, g^2_j)\in \mathcal{G}(g^2_j)$ such that $\lambda(g^1_r, g^2_j)=(g^1_t, g^2_l)$ and $G_1$ has no automorphism $\alpha$ for which $\alpha(g^1_i)=g^1_k$ and $\alpha(g^1_r)=g^1_t$ which yields that $\langle\lambda(\mathcal{G}(g^2_j))\rangle\ncong G_1$ but $\langle\mathcal{G}(g^2_j)\rangle\cong G_1$, a contradiction to the fact that $\lambda$ is an automorphism by Remark \ref{rem}.\\
Case 2) Suppose $\lambda(g^1_i, g^2_j)=(g^1_k,g^2_l)$ such that $g^1_i$ is not similar to $g^1_k$ in $G_1$ then $\langle\lambda(\mathcal{G}(g^1_i))\rangle\ncong G_2$ because $\lambda$ is a bijective mapping, a contradiction to the fact that $\lambda$ is an automorphism by Remark \ref{rem}. Now suppose $g^2_j$ is not similar to $g^2_l$ in $G_2$ then $\langle\lambda(\mathcal{G}(g^2_j))\rangle\ncong G_1$ because $\lambda$ is a bijective mapping, a contradiction to the fact that $\lambda$ is an automorphism by Remark 2.10.
\end{proof}

A path graph with three vertices has false twins and each path graph with at least four vertices has no false twins. Hence, we have the following corollary from Theorem \ref{nm1}.
\begin{cor}For a rigid graph $G_1$ with no dominating vertex and $G_2\cong P_n$;
$n\geq4$, $Aut(G_1\ast G_2)\cong S_2$.
\end{cor}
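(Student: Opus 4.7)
The plan is to apply Theorem \ref{n1} directly, after verifying that the path $P_n$ with $n \geq 4$ satisfies the relevant structural conditions. The hypotheses of the corollary give us a rigid graph $G_1$ with no dominating vertex, so the first job is to check that $G_2 = P_n$ has no false twins.

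For $P_n$ with vertices $v_1, v_2, \ldots, v_n$ in order, I would check the two possible sources of false twins: the leaves $v_1, v_n$ have $N(v_1) = \{v_2\}$ and $N(v_n) = \{v_{n-1}\}$, which differ for $n \geq 4$; any two internal vertices $v_i, v_j$ have neighborhoods $\{v_{i-1}, v_{i+1}\}$ and $\{v_{j-1}, v_{j+1}\}$, which cannot coincide for $i \neq j$. So $P_n$ has no false twins when $n \geq 4$. Clearly $P_n$ also has no dominating vertex, but we only need the false twin condition on $G_2$.

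With both hypotheses of Theorem \ref{n1} verified ($G_1$ has no dominating vertex, $G_2 = P_n$ has no false twins), the theorem gives $|Aut(G_1 \ast G_2)| = |Aut(G_2)|$. Moreover, inspecting the proof of Theorem \ref{n1}, the inclusion $Aut(G_1) \times Aut(G_2) \subseteq Aut(G_1 \ast G_2)$ from Theorem \ref{g1} combined with the cardinality equality and the rigidity of $G_1$ (which makes $|Aut(G_1) \times Aut(G_2)| = |Aut(G_2)|$) yields $Aut(G_1 \ast G_2) = Aut(G_1) \times Aut(G_2)$. Since $Aut(G_1)$ is trivial, this collapses to $Aut(G_1 \ast G_2) \cong Aut(P_n)$.

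Finally I would invoke the standard fact that $Aut(P_n) \cong S_2$ for $n \geq 2$, generated by the reflection $v_i \mapsto v_{n+1-i}$. This completes the identification $Aut(G_1 \ast G_2) \cong S_2$. There is no real obstacle here: the work has already been done in Theorem \ref{n1}, and the corollary is essentially a verification that $P_n$ meets the hypotheses together with the observation that rigidity of $G_1$ reduces the direct product to the path's automorphism group.
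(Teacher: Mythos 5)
Your proposal is correct and follows exactly the route the paper intends: the corollary is stated without proof as an immediate consequence of the preceding theorem, and your verification that $P_n$ ($n\geq 4$) has no false twins together with $Aut(P_n)\cong S_2$ is precisely what is needed. (In fact, once $|Aut(G_1\ast G_2)|=|Aut(P_n)|=2$ is established, the isomorphism to $S_2$ is automatic, so your extra inspection of the theorem's proof is harmless but unnecessary.)
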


The next corollary is a direct consequence of Theorem \ref{g8}.
\begin{cor}\label{n1}Let $G_1$ and $G_2$ be two non-isomorphic,
non-rigid graphs. For each $(g^1_i, g^2_j)\in V(G_1\ast G_2)$,
$Stab(g^1_i, g^2_j)$ $= Stab(g^1_i)\times Stab(g^2_j)$ if and only
if neither $G_1$ nor $G_2$ has dominating vertex and false twins.
\end{cor}

In the following theorem, we give the automorphism group of $G_1\ast G_2$, when $G_1$ and $G_2$ are isomorphic rigid graphs.
\begin{thm}\label{s1}For any two rigid isomorphic graphs $G_1$ and $G_2$ with no dominating vertices,
$Aut(G_1\ast G_2)\cong S_2$.
\end{thm}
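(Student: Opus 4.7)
The plan is to prove both $|Aut(G_1\ast G_2)|\geq 2$ and $|Aut(G_1\ast G_2)|\leq 2$, which together yield a group of order two.

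For the lower bound, I would first observe that since $G_1$ is rigid, any isomorphism $\alpha\colon G_1\rightarrow G_2$ is uniquely determined: if $\alpha_1,\alpha_2$ are two such isomorphisms, then $\alpha_2^{-1}\circ\alpha_1\in Aut(G_1)=\{i_{G_1}\}$, forcing $\alpha_1=\alpha_2$; similarly the unique isomorphism $G_2\rightarrow G_1$ is $\alpha^{-1}$. By Theorem \ref{g1}(2), the flip $\lambda(g_i^1,g_j^2)=(\alpha^{-1}(g_j^2),\alpha(g_i^1))$ lies in $Aut(G_1\ast G_2)$, and a direct computation shows $\lambda\neq i_{G_1\ast G_2}$ while $\lambda^2=i_{G_1\ast G_2}$. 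Hence $\langle\lambda\rangle\cong S_2\leq Aut(G_1\ast G_2)$.

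For the upper bound, I would exploit rigidity to exclude every other automorphism. Rigidity of a graph $H$ precludes both false and true twins, since any such pair would induce a non-trivial transposition in $Aut(H)$; it therefore also precludes two distinct dominating vertices, which are automatically true twins. Consequently none of the six cases of Corollary \ref{g7} applies to $G_1\ast G_2$, so $Aut(G_1\ast G_2)$ contains no $(u,v)$-interchange. Now, mirroring the classification step in the proof of Theorem \ref{n1}, any $\phi\in Aut(G_1\ast G_2)$ must be a rotation about $G_1$, a rotation about $G_2$, a flip, or an interchange; rotations are parameterised by $Aut(G_i)=\{i_{G_i}\}$ and so are trivial, interchanges are excluded, and a flip, which exists because $G_1\cong G_2$, is determined by the unique $\alpha$ from the first paragraph and therefore coincides with $\lambda$. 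Hence $\phi\in\{i_{G_1\ast G_2},\lambda\}$.

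The delicate point, where I expect the main difficulty, is justifying that every automorphism $\phi$ of $G_1\ast G_2$ falls into the rotation/flip/interchange taxonomy used implicitly by the preceding theorems. To make this rigorous I would analyse how $\phi$ acts on the two fiber families $\{\mathcal{G}(g_i^1)\}_i$ and $\{\mathcal{G}(g_j^2)\}_j$, using that $\langle\mathcal{G}(g_i^1)\rangle\cong G_2$ and $\langle\mathcal{G}(g_j^2)\rangle\cong G_1$, and that both families tile $V(G_1\ast G_2)$ like the rows and columns of a grid with every row meeting every column in exactly one vertex. The goal would be to show that $\phi$ either preserves each family setwise or swaps the two families, and that rigidity together with the absence of twins forces the induced bijection between fibers to be either the identity, yielding $\phi=i_{G_1\ast G_2}$, or the unique isomorphism $\alpha$, yielding $\phi=\lambda$.
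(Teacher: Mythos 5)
Your proposal matches the paper's proof in its essentials: both exhibit the flip $\lambda=(\phi,\psi)$ supplied by Theorem \ref{g1}(2), observe that rigidity of the factors forces the cross-isomorphisms (hence the flip) to be unique, and conclude $Aut(G_1\ast G_2)\cong S_2$. If anything you give more justification for the upper bound than the paper does --- the paper simply asserts ``this automorphism is unique,'' whereas your exclusion of rotations and interchanges and your fiber-based sketch for why the rotation/flip/interchange taxonomy is exhaustive is precisely the detail that assertion leaves implicit.
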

\begin{proof}Since $G_1\cong G_2$ so there exists isomorphisms $\phi$ and $\psi$ from $G_1$ to $G_2$ and $G_2$ to $G_1$, respectively, such that mapping $\lambda$ defined in Theorem \ref{g1} (2) built from $\phi$ and $\psi$ is a non-trivial automorphism of $G_1\ast G_2$. Given that $G_1$ and $G_2$ are rigid graphs which gives that $\phi$ and $\psi$ are unique isomorphisms. Hence, $G_1\ast G_2$ has a unique non-trivial automorphism.
\end{proof}

\section{Fixing Number of Co-normal Product}
This section is devoted towards the study of the fixing number of the co-normal product of two graphs.
For a rigid graph $H$, $fix(H)=0$ and Theorem \ref{f1} gives that $fix(G_1\ast G_2)=0$ if and only if
$G_1$ and $G_2$ are non-isomorphic rigid graphs. Also, Theorem
\ref{s1} gives that for two isomorphic rigid graphs $G_1$ and $G_2$,
$fix(G_1\ast G_2)=1$. The graph $G_1+G_2$ obtained from two graphs $G_1$ and $G_2$, has $V(G_1)\cup V(G_2)$ as its vertex set and $E(G_1)\cup E(G_2)\cup \{g^1_i\sim g^2_j| g^1_i\in V(G_1), g^2_j\in V(G_2)\}$ as its edge set, is named as join graph. The next observation and Proposition \ref{1} will help in proving our next results.
\begin{obs} If $G_1$ has a dominating vertex $g^1_i$,
then $G_1\ast G_2= \langle V(G_1)\setminus \{g^1_i\}\rangle\ast G_2 +
\langle\mathcal{G}(g^1_i)\rangle$.
\end{obs}
\begin{prop}\label{1}For a rigid graph $G$ with a dominating vertex $v$, the induced subgraph $G^\prime=\langle V(G)\setminus \{v\}\rangle$ of $G$, is also a rigid graph.
\end{prop}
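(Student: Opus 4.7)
The plan is to argue by contrapositive: assume $\acute{G}$ has a non-trivial automorphism and manufacture a non-trivial automorphism of $G$, contradicting rigidity. The lever is that a dominating vertex is adjacency-neutral in the following sense: any bijection that fixes $v$ and acts on $V(G)\setminus\{v\}$ automatically respects the edges incident to $v$, so only the internal adjacencies of $\acute{G}$ matter.

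Concretely, suppose $\alpha \in Aut(\acute{G})$ is not the identity. I would define $\beta : V(G) \to V(G)$ by setting $\beta(v) = v$ and $\beta(x) = \alpha(x)$ for $x \in V(\acute{G})$. Clearly $\beta$ is a bijection. For two vertices $x, y \in V(\acute{G})$, the equivalence $x \sim y$ in $G$ iff $x \sim y$ in $\acute{G}$ (by definition of the induced subgraph) combined with the fact that $\alpha$ is an automorphism of $\acute{G}$ gives $x \sim y$ in $G$ iff $\beta(x) \sim \beta(y)$ in $G$. For the remaining pairs involving $v$: since $v$ is dominating, $v \sim x$ for every $x \in V(\acute{G})$, and $\beta(x) = \alpha(x) \in V(\acute{G})$ is likewise adjacent to $\beta(v) = v$; non-adjacencies with $v$ do not arise. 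Hence $\beta \in Aut(G)$.

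Since $\alpha$ is non-trivial, some $x \in V(\acute{G})$ satisfies $\alpha(x) \neq x$, hence $\beta(x) \neq x$, so $\beta$ is a non-trivial automorphism of $G$. This contradicts the rigidity of $G$, proving that $\acute{G}$ must itself be rigid.

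There is essentially no main obstacle here: the argument is a direct extension-by-identity-on-$\{v\}$ construction, and the dominating property of $v$ is precisely what guarantees the extension preserves adjacency at $v$ without any case analysis. One only needs to make sure to note that $\alpha$ maps $V(\acute{G})$ into itself (so $\beta$ is well-defined as a bijection of $V(G)$) and that Observation~5 is not needed for the proof, only for understanding why the hypothesis is consistent.
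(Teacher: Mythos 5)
Your proof is correct and follows essentially the same route as the paper: both extend a putative non-trivial automorphism of $\acute{G}$ to $G$ by declaring it to fix $v$, and conclude by contradiction with the rigidity of $G$. If anything, your write-up is more careful than the paper's, since you explicitly verify that the dominating property of $v$ is what makes the extension preserve adjacency at $v$.
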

\begin{proof}Let $\psi$ be an
automorphism of $G^\prime$. The mapping $\psi_v: V(G)\rightarrow
V(G)$, defined as $\psi_v(v)=v$ and $\psi_v(u)=\psi(u)$ for $u\in
V(G)\setminus \{v\}$, is a non-trivial automorphism of $G$ if and only if $\psi$ is a non-trivial automorphism of $G^\prime$.
Hence, $G^\prime$ is rigid.
\end{proof}
For a graph $H$, a graph $H^c$ with $V(H)$ as its vertex set and two vertices are adjacent whenever they are not adjacent in $H$ is named as  \emph{complement graph} of $H$. The \emph{strong
product} of two graphs $G_1$ and $G_2$ is a graph with the vertex set
$V(G_1)\times V(G_2)$ and the adjacency is defined as: $(g^1_i,
g^2_j)\sim (g^1_k, g^2_l)$ if and only if $g^1_i=g^1_k$ and
$g^2_j\sim g^2_l$, $g^1_i\sim g^1_k$ and $g^2_j= g^2_l$ or
$g^1_i\sim g^1_k$ and $g^2_j\sim g^2_l$, denoted as $G_1\boxtimes
G_2$. In \cite{a0}, Kuziak et al. proved that $(G_1\ast G_2)^c=
G_1^c\boxtimes G_2^c$, where $G_1^c\boxtimes G_2^c$ is the strong
product of $G_1^c$ and $G_2^c$. Moreover, $fix(G_1)=fix(G_1^c)$ \cite{JC}. This yields the following.
\begin{thm} For any two graphs $G_1$ and $G_2$, $fix(G_1\ast G_2)=fix(G_1^c\boxtimes G_2^c)$.
\end{thm}
The following observation follows from the definition of co-normal product of graphs.
\begin{obs}\label{o2}
	If $G_1$ has $r$ components
	$G^1_1,G^1_2,\ldots,G^1_r$ and $G_2$ has $s$ components $G^2_1,
	G^2_2,$$\ldots,$ $G^2_s$, then for each $x\in V(G^1_i)\times V(G^2_j)$
	and $y,z\in V(G^1_k)$ $\times$ $ V(G^2_l)$, we have $d(x,y)=d(x,z)$ for all
	$i\neq k$ and $j\neq l$ in $G_1\ast G_2$.
\end{obs}

The next lemma follows from Observation \ref{o2}.
\begin{lem}\label{2}If $g^1_i$ is a dominating vertex in $G_1$ then vertices of the
	class $\mathcal{G}(g^1_i)$ are not fixed by
	any vertex from $V(G_1\ast G_2)\setminus \mathcal{G}(g^1_i)$.
\end{lem}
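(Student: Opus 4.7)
My plan is to deduce this directly from Proposition \ref{g5}, using the standing assumption that all factor graphs in the paper are non-rigid to produce a witness automorphism. The idea is that whenever a vertex $u\in V(G_1\ast G_2)\setminus \mathcal{G}(g^1_i)$ is proposed as fixing the class $\mathcal{G}(g^1_i)$, I will exhibit a non-identity automorphism lying in $Stab(u)$ that still moves some vertex of $\mathcal{G}(g^1_i)$, so $u$ cannot force the vertices of $\mathcal{G}(g^1_i)$ to be fixed.

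Concretely, because $g^1_i$ is a dominating vertex of $G_1$, Proposition \ref{g5} supplies, for every $\psi\in Aut(G_2)$, the automorphism $\lambda$ of $G_1\ast G_2$ which acts as $(i_{G_1},\psi)$ on $\mathcal{G}(g^1_i)$ and as the identity on $V(G_1\ast G_2)\setminus \mathcal{G}(g^1_i)$. Since $G_2$ is non-rigid, I can select $\psi\in Aut(G_2)$ with $\psi\neq i_{G_2}$, and then pick $g^2_j\in V(G_2)$ with $\psi(g^2_j)\neq g^2_j$. The resulting $\lambda$ pointwise fixes every vertex of $V(G_1\ast G_2)\setminus \mathcal{G}(g^1_i)$, while $\lambda(g^1_i,g^2_j)=(g^1_i,\psi(g^2_j))\neq (g^1_i,g^2_j)$.

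From this it follows at once that for any $u\in V(G_1\ast G_2)\setminus \mathcal{G}(g^1_i)$ (and indeed for any subset $S$ of such vertices), $\lambda\in Stab(u)$ yet $\lambda\notin Stab(g^1_i,g^2_j)$, so $Stab(u)\not\subseteq Stab(g^1_i,g^2_j)$; consequently labelling only vertices outside $\mathcal{G}(g^1_i)$ cannot force the members of $\mathcal{G}(g^1_i)$ to be fixed. I do not foresee a genuine obstacle: Proposition \ref{g5} already contains the structural content of the argument, and the only care required is to invoke the blanket non-rigidity hypothesis on $G_2$ to guarantee the existence of a non-identity $\psi$, which is what makes the restriction of $\lambda$ to $\mathcal{G}(g^1_i)$ move at least one vertex.
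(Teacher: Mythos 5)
Your argument is correct, and it is worth noting that the paper states Lemma \ref{2} with no proof at all, so your write-up supplies exactly the justification the authors evidently intended: Proposition \ref{g5} gives, for each $\psi\in Aut(G_2)$, an automorphism supported entirely on the fibre $\mathcal{G}(g^1_i)$, and the blanket non-rigidity assumption on $G_2$ guarantees a non-identity choice of $\psi$, so this automorphism lies in $Stab(u)$ for every $u$ outside the fibre while moving some $(g^1_i,g^2_j)$ inside it. The only caveat, which is a defect of the lemma's phrasing rather than of your proof, is that a vertex $(g^1_i,g^2_j)$ with $g^2_j$ a fixed vertex of $G_2$ is left alone by every such $\lambda$; your argument (correctly) establishes the statement for the vertices $(g^1_i,g^2_j)$ with $g^2_j$ not fixed in $G_2$, and your parenthetical extension to arbitrary subsets $S$ outside the fibre is precisely the form in which the lemma is actually applied later in the paper.
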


In the next theorem, we give sharp bounds on the fixing number of the co-normal product of two graphs.
\begin{thm}\label{fix1}For any two graphs $G_1$ and $G_2$,
\[max\{fix(G_1),fix(G_2)\}\leq fix(G_1\ast G_2)\leq |V(G_1)||V(G_2)|-1.\]
\end{thm}
\begin{proof}Let $\mathcal{F}\subseteq V(G_1\ast G_2)$, be an arbitrary set such that $|\mathcal{F}|< max\{fix(G_1)$, $ fix(G_2)\}$. Without loss of generality, suppose that $max\{fix(G_1), fix(G_2)\}= fix(G_2)$ and define $\mathcal{F}_1=\{g^1_i\in V(G_1)| (g^1_i, g^2_j)\in \mathcal{F}$ for some $g^2_j\in V(G_2)\}\subseteq V(G_1)$ and $\mathcal{F}_2=\{g^2_j\in V(G_2)| (g^1_i, g^2_j)\in \mathcal{F}$ for some $g^1_i\in V(G_1)\}\subseteq V(G_2)$.
Since $|\mathcal{F}|<fix(G_2)$ so $\cap_{g^2_j\in \mathcal{F}_2}$ $Stab(g^2_j)\neq \{id_{G_2}\}$. Let $id_{G_2}\neq \psi\in \cap_{g^2_j\in \mathcal{F}_2}$ $Stab(g^2_j)$, then mapping $\lambda=(id_{G_1}, \psi)$ is a non-trivial automorphism of $G_1\ast G_2$. Hence, $\mathcal{F}$ is not a fixing set for $G_1\ast G_2$ and $max\{fix(G_1),fix(G_2)\}\leq fix(G_1\ast G_2)$. The upper bound follows directly from the definition of $G_1\ast G_2$ and the fixing number of a graph.
\end{proof}

The upper bound given in Theorem \ref{fix1} is attained, when both $G_1$ and $G_2$ are complete or null graphs. The next lemma follows from Theorem \ref{nm1}.
\begin{lem}\label{stab}For a rigid graph $G_1$ without dominating vertex and a non-rigid graph $G_2$ without false twins, $Stab(g^1_i,g^2_j)=Stab(g^1_i)\times Stab(g^2_j)$ for each $(g^1_i,g^2_j)\in V(G_1\ast G_2)$.
\end{lem}

In next theorem, we give some conditions on $G_1$ and $G_2$ under which the lower bound given in Theorem \ref{fix1} is attained.
\begin{thm}\label{3}
For any two non-isomorphic graphs $G_1$ and $G_2$,
$fix(G_1\ast G_2)$ $=max\{fix(G_1),$ $fix(G_2)\}$, if one of the following holds: \\
(1) $G_1$ and $G_2$ are rigid graphs.\\
(2) $G_1$ is a rigid graph without dominating vertex and $G_2$ is a non-rigid graph without false twins.\\
(3) $G_1$ and $G_2$ are non-rigid graphs without dominating vertices and false twins.
\end{thm}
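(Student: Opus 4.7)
The plan is to treat the three cases separately once a uniform lower bound is in place. For the lower bound, observe that if $\mathcal{F}\subseteq V(G_1\ast G_2)$ is any fixing set and $\pi_i$ denotes coordinate projection, then $\pi_i(\mathcal{F})$ must be a fixing set of $G_i$: otherwise a nontrivial $\alpha\in Aut(G_i)$ pointwise-fixing $\pi_i(\mathcal{F})$ would lift, via Theorem \ref{g1}(1) by pairing $\alpha$ with the identity on the other factor, to a nontrivial automorphism of $G_1\ast G_2$ fixing $\mathcal{F}$ pointwise. Hence $|\mathcal{F}|\ge|\pi_i(\mathcal{F})|\ge fix(G_i)$ for both $i$, giving $fix(G_1\ast G_2)\ge\max\{fix(G_1),fix(G_2)\}$. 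Case (1) is then already done: Theorem \ref{f1} yields that $G_1\ast G_2$ is rigid, so $fix(G_1\ast G_2)=0=\max\{fix(G_1),fix(G_2)\}$.

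For case (2), so $fix(G_1)=0$ and the target size is $t:=fix(G_2)$, I would pick an arbitrary $g^1\in V(G_1)$ together with a minimum fixing set $F_2=\{g^2_{j_1},\ldots,g^2_{j_t}\}$ of $G_2$, and form $F':=\{(g^1,g^2_{j_k}):1\le k\le t\}$. Lemma \ref{w1} applies to $G_1,G_2$ and gives $Stab(g^1,g^2_{j_k})=Stab(g^1)\times Stab(g^2_{j_k})=\{i_{G_1}\}\times Stab(g^2_{j_k})$, the first factor collapsing because $G_1$ is rigid. Intersecting over $k$ yields $Stab(F')=\{i_{G_1}\}\times Stab(F_2)=\{i_{G_1\ast G_2}\}$, so $|F'|=t$ attains the bound. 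For case (3), set $t:=fix(G_1)\ge fix(G_2)=:s$ without loss of generality, with minimum fixing sets $F_1=\{g^1_{i_1},\ldots,g^1_{i_t}\}$ and $F_2=\{g^2_{j_1},\ldots,g^2_{j_s}\}$, and pair them into
\[F':=\{(g^1_{i_k},g^2_{j_k}):1\le k\le s\}\cup\{(g^1_{i_k},g^2_{j_s}):s<k\le t\},\]
so that $\pi_1(F')=F_1$, $\pi_2(F')=F_2$, and $|F'|=t$. The hypotheses of case (3) are precisely what is needed to invoke Corollary \ref{n1}, yielding $Stab(v)=Stab(\pi_1(v))\times Stab(\pi_2(v))$ for each $v\in F'$; since intersection distributes over cartesian products, $Stab(F')=Stab(F_1)\times Stab(F_2)=\{i_{G_1\ast G_2}\}$.

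The main obstacle, already absorbed by the preceding lemmas, is the stabilizer decomposition $Stab(g^1,g^2)=Stab(g^1)\times Stab(g^2)$ used in cases (2) and (3). Without excluding false twins and dominating vertices, Proposition \ref{g5} and Corollary \ref{g7} produce extra automorphisms of $G_1\ast G_2$, namely rotations supported on a single class $\mathcal{G}(g^1_i)$ or interchanges of twins across classes, that act trivially on both coordinate projections but non-trivially on the product; a set whose projections are fixing in each factor would then fail to fix $G_1\ast G_2$ and the construction above would break. Once the decomposition is available in the form of Lemma \ref{w1} for case (2) and Corollary \ref{n1} for case (3), the remaining work is only the elementary combinatorics of pairing minimum fixing sets of the factors so both projections land exactly on $F_1$ and $F_2$.
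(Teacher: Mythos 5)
Your proposal is correct and follows essentially the same route as the paper: case (1) via Theorem \ref{f1}, case (2) by pairing a fixed $g^1$ with a minimum fixing set of $G_2$ and invoking Lemma \ref{w1}, and case (3) by pairing the two minimum fixing sets so that both projections are onto and invoking Corollary \ref{n1}. The only (cosmetic) difference is that you prove the lower bound once, up front, by noting that projections of fixing sets must be fixing sets (lifting a surviving $\alpha\in Aut(G_i)$ to $(\alpha,i_d)$ via Theorem \ref{g1}(1)), whereas the paper repeats this same projection argument inside each case to establish minimality.
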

\begin{proof}(1) Proof follows from Theorem \ref{f1}.

(2) Let $F_2=\{g^2_{1}, g^2_{2},\ldots, g^2_{l}\}\subseteq V(G_2)$ be a minimum fixing set for $G_2$. For any vertex
$g^1_i\in V(G_1)$, consider set $F(g^1_i)=\{(g^1_i, g^2_{j})|
g^2_{j}\in F_2\}\subseteq V(G_1\ast G_2)$. First, we prove that
$F(g^1_i)$ is a fixing set for $G_1\ast G_2$. As $G_1$ and $G_2$ satisfies the hypothesis of Lemma \ref{stab} yields that $Stab(g^1_i, g^2_j)=Stab(g^1_i)\times Stab(g^2_j)$ for all $(g^1_i, g^2_j)\in V(G_1\ast G_2)$. Moreover, $G_1$ is a rigid graph and $\cap_{g^2_j\in F_2}Stab(g^2_j)= \{id_{G_2}\}$ implies that $\cap_{(g^1_i, g^2_j)\in
F(g^1_i)}Stab(g^1_i, g^2_j)=\{id_{G_1\ast G_2}\}$. Hence,
$F(g^1_i)$ is a fixing set for $G_1\ast G_2$. In order to prove that $F(g^1_i)$ is a minimum fixing set consider any subset $\mathcal{F}$ of $V(G_1\ast G_2)$ such that $|\mathcal{F}|<l$ and define
${\mathcal{F}_1}=\{g^1_i\in V(G_1)| (g^1_i, g^2_j)\in \mathcal{F}$ for
some $g^2_j\in V(G_2)\}$ and $\mathcal{F}_2=\{g^2_j\in V(G_2)|
(g^1_i, g^2_j)\in \mathcal{F}$ for some $g^1_i\in V(G_1)\}$.
Since, $|\mathcal{F}|<l$ so $|\mathcal{F}_2|<l$, moreover, $fix(G_2)=l$ yields that $\cap_{g^2_j\in \mathcal{F}_2}Stab(g^2_j)\neq \{id_{G_2}\}$ and $\cap_{(g^1_i, g^2_j)\in \mathcal{F}}Stab(g^1_i,
g^2_j)\neq\{id_{G_1\ast G_2}\}$ which gives that $\mathcal{F}$ is not a fixing set for $G_1\ast G_2$. Hence, $F(g^1_i)$ is a
minimum fixing set for $G_1\ast G_2$.

(3) Let $F_1=\{g^1_{1}, g^1_{2},\ldots, g^1_{r}\}\subseteq V(G_1)$ and $F_2=\{g^2_{1}, g^2_{2},\ldots, g^2_{l}\}\subseteq V(G_2)$ are minimum fixing sets for $G_1$ and $G_2$, respectively. We define $F=\{(g^1_{1}, g^2_{1}), (g^1_{2},
g^2_{2}),\ldots, (g^1_{r}, g^2_{r})$, $(g^1_{r}, g^2_{r+1}),\ldots,$
$(g^1_{r}, g^2_{l})\}$ $\subseteq V(G_1\ast G_2)$ such that $|F|=l$. Since $F_1$ and $F_2$ are fixing sets so $\cap_{g^1_{i}\in
F_1}Stab(g^1_{i})= \{id_{G_1}\}$ and $\cap_{g^2_{j}\in F_2}
Stab(g^2_{j})= \{id_{G_2}\}$, further, $G_1$ and $G_2$ satisfies the conditions of Corollary \ref{n1} yields that $Stab(g^1_i, g^2_i)=Stab(g^1_i)\times
Stab(g^2_i)$ for each $(g^1_i, g^2_i)\in V(G_1\ast G_2)$. Hence, $\cap_{(g^1_{i}, g^2_{j})\in
F}Stab(g^1_{i}, g^2_{j})=\{id_{G_1\ast G_2}\}$ and $F$ is a fixing set for $G_1\ast G_2$. In order to prove that $F$ is a minimum fixing set, consider any subset $\mathcal{F}\subseteq V(G_1\ast G_2)$
such that $|\mathcal{F}|< l$ and define
$\mathcal{F}_1=\{g^1_i\in V(G_1)| (g^1_i, g^2_j)\in \mathcal{F}$ for some $g^2_j\in V(G_2)\}$ and $\mathcal{F}_2=\{g^2_j\in V(G_2)| (g^1_i, g^2_j)\in \mathcal{F}$
for some $g^1_i\in V(G_1)\}$. Also, $|\mathcal{F}|<l$ gives that
$|{\mathcal{F}_2}|<l$ which yields that $\mathcal{F}_2$ is not a fixing set for $G_2$,
i.e., $\cap_{g^2_j\in \mathcal{F}_2}Stab(g^2_j)\neq \{id_{G_2}\}$. Hence,
$\cap_{(g^1_i, g^2_j)\in \mathcal{F}}Stab(g^1_i,
g^2_j)\neq\{id_{G_1\ast G_2}\}$ which yields that $\mathcal{F}$ is not a fixing set for $G_1\ast G_2$ and we conclude that $F$ is a minimum fixing set for $G_1\ast G_2$.
\end{proof}

In the next theorem, we give the fixing number of $G_1\ast G_2$, when exactly one of $G_1$ or $G_2$ is a rigid graph.
\begin{thm}\label{4}If $G_1$ is a rigid graph with a dominating vertex
and $G_2$ is a non-rigid graph without false twins, then
$fix(G_1\ast G_2)=2fix(G_2)$.
\end{thm}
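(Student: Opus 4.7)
The plan is to first identify $Aut(G_1\ast G_2)$ explicitly and then read the fixing number off from the stabilizers of points. Let $v$ denote the dominating vertex of $G_1$; by Observation 5 it is unique, and by Proposition \ref{1} the induced subgraph $G'=\langle V(G_1)\setminus\{v\}\rangle$ is rigid, and it has no dominating vertex (otherwise $G_1$ would contain two). Observation 4 then writes $G_1\ast G_2=(G'\ast G_2)+\langle\mathcal{G}(v)\rangle$, where $\langle\mathcal{G}(v)\rangle\cong G_2$.

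The main claim to establish is that every $\lambda\in Aut(G_1\ast G_2)$ has the form
\[
\lambda_{\alpha,\beta}(g_i^1,g_j^2)=\begin{cases}(g_i^1,\alpha(g_j^2)) & \text{if } g_i^1\neq v,\\ (v,\beta(g_j^2)) & \text{if } g_i^1=v,\end{cases}
\]
for a unique pair $(\alpha,\beta)\in Aut(G_2)\times Aut(G_2)$. That each such $\lambda_{\alpha,\beta}$ is an automorphism follows from Theorem \ref{g1}(1) (the ``global rotation'' by $\alpha$) composed with Proposition \ref{g5} (the ``local rotation at $v$'' by $\beta\circ\alpha^{-1}$, valid since $v$ is dominating). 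For the converse I would first argue that $\lambda(\mathcal{G}(v))=\mathcal{G}(v)$ by analysing the join decomposition above together with the connected components of $(G_1\ast G_2)^c=G_1^c\boxtimes G_2^c$: since $v$ is an isolated vertex of $G_1^c$ while every other component of $G_1^c$ has at least two vertices (because $G'$ has no dominating vertex), the components corresponding to $\mathcal{G}(v)$ are distinguished. Once $\mathcal{G}(v)$ is shown invariant, the rigidity of $G'$ together with the absence of false twins in $G_2$ (invoking Theorem \ref{g6}) force a single $\alpha\in Aut(G_2)$ to describe $\lambda$ on every layer $\mathcal{G}(g_i^1)$ with $g_i^1\neq v$, while the restriction of $\lambda$ to $\mathcal{G}(v)\cong G_2$ yields the independent $\beta\in Aut(G_2)$.

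With this classification in hand the count is straightforward. For $F\subseteq V(G_1\ast G_2)$, set $A=\{g_j^2:(g_i^1,g_j^2)\in F \text{ for some } g_i^1\neq v\}$ and $B=\{g_j^2:(v,g_j^2)\in F\}$. Then $\lambda_{\alpha,\beta}$ fixes $F$ pointwise iff $\alpha$ fixes every element of $A$ and $\beta$ fixes every element of $B$, so
\[
Stab(F)=\Big(\bigcap_{g\in A}Stab_{G_2}(g)\Big)\times\Big(\bigcap_{g\in B}Stab_{G_2}(g)\Big).
\]
Triviality of this product forces both $A$ and $B$ to be fixing sets of $G_2$, hence $|A|,|B|\geq fix(G_2)$; disjointness of $F\cap\mathcal{G}(v)$ with $F\setminus\mathcal{G}(v)$ then gives $|F|\geq|A|+|B|\geq 2\,fix(G_2)$. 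For the matching upper bound, I would pick a minimum fixing set $S$ of $G_2$ together with any $w\in V(G_1)\setminus\{v\}$ and take $F=\{v,w\}\times S$; then $A=B=S$ and $|F|=2\,fix(G_2)$, so $F$ is a minimum fixing set.

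The main obstacle is the invariance $\lambda(\mathcal{G}(v))=\mathcal{G}(v)$. It is intuitively immediate from the join structure but must be carried out carefully when $G_2$ itself has a dominating vertex, since that creates additional ``universal-to-outside'' subsets of $G_1\ast G_2$ whose sizes could coincide with $|\mathcal{G}(v)|$; the key fact to exploit is the absence of a dominating vertex in $G'$, which rules out any ``foreign'' $|G_2|$-set being adjacent to everything in its complement while still inducing a subgraph isomorphic to $G_2$. Once this invariance is established, the rest of the argument assembles from Theorems \ref{g1} and \ref{g6}, Proposition \ref{g5}, and — for reducing the action on $V\setminus\mathcal{G}(v)$ — the already-proven Theorem \ref{3}(2) applied inside $G'\ast G_2$.
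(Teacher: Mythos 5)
Your counting step is fine \emph{granted} the classification $Aut(G_1\ast G_2)\cong Aut(G_2)\times Aut(G_2)$, but the step you yourself flag as the main obstacle --- the invariance $\lambda(\mathcal{G}(v))=\mathcal{G}(v)$ --- is a genuine gap, and your proposed repair does not close it. You argue at the level of the whole set $\mathcal{G}(v)$ (``no foreign $|G_2|$-set is adjacent to everything outside it while inducing a copy of $G_2$''), but an automorphism of $(G_1\ast G_2)^c=G_1^c\boxtimes G_2^c$ merely permutes the connected components of this disjoint union; it may send a single component of $\{v\}\times G_2^c$ onto an isomorphic component lying outside $\mathcal{G}(v)$ without sending $\mathcal{G}(v)$ to any structured set at all. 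The absence of a dominating vertex in $G'=\langle V(G_1)\setminus\{v\}\rangle$ guarantees that every component of $(G')^c$ has at least two vertices, but it does not prevent a component of $(G')^c\boxtimes G_2^c$ from being isomorphic to a component of $G_2^c$: precisely in the case you worried about, where $G_2$ has a dominating vertex $d$, the singleton $\{d\}$ is an isolated vertex of $G_2^c$ and $C\boxtimes\{d\}\cong C$ is a component of $(G'\ast G_2)^c$ for every component $C$ of $(G')^c$.

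This is not a repairable presentation issue: the invariance, the classification, and in fact the stated equality $fix(G_1\ast G_2)=2fix(G_2)$ all fail in this regime. Take $H$ an asymmetric graph on six vertices with no dominating vertex, let $C$ be a component of $H^c$ (so $|C|\geq 2$, $C$ is asymmetric and has no true twins), and set $G_1=H+K_1$ (rigid, with dominating vertex $v$) and $G_2=(C\sqcup C\sqcup K_1)^c$. Then $G_2$ is non-rigid with $Aut(G_2)\cong S_2$ and $fix(G_2)=1$, has no false twins (no true twins in $G_2^c$), and has a dominating vertex $d$. The complement $(G_1\ast G_2)^c=(H^c\sqcup K_1)\boxtimes(C\sqcup C\sqcup K_1)$ contains at least three pairwise isomorphic rigid components isomorphic to $C$ --- two inside $\mathcal{G}(v)$ and one, namely the copy of $C$ inside $V(H)\times\{d\}$, outside it --- which $Aut(G_1\ast G_2)$ permutes as $S_3$ and which alone force at least two fixed vertices, and additionally at least two components isomorphic to $C\boxtimes C$, each carrying a coordinate-swapping automorphism and hence each requiring a fixed vertex of its own. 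So $fix(G_1\ast G_2)\geq 4>2=2fix(G_2)$. Your argument (and the theorem) does go through when no component of $G_2^c$ is isomorphic to a component of $(G')^c\boxtimes G_2^c$ --- for instance when $G_2^c$ is connected, which covers the paper's subsequent corollary on $P_n$ and $C_n$ --- so the correct conclusion is that an additional hypothesis of this kind is needed. For comparison, the paper's own proof reaches the formula by a different route (the unproved Lemma \ref{2} plus an asserted additivity $fix(G_1\ast G_2)=fix(G'\ast G_2)+fix(G_2)$ across the join, then Theorem \ref{3}(2)), and it is exactly that additivity which the same example refutes; so your instinct to make the automorphism group explicit was sound, and it locates the error rather than hiding it.
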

\begin{proof} Let $g^1_k$ be dominating vertex of $G_1$
and $G^{\prime}= \langle V(G_1)\setminus \{g^1_k\}\rangle$ be induced
subgraph of $G_1$. Observation \ref{o2} yields that $G_1\ast G_2=G^{\prime}\ast G_2+ \langle\mathcal{G}(g^1_k)\rangle$ and using Lemma \ref{2} we have $fix(G_1\ast G_2)=fix(G^{\prime}\ast G_2)+ fix(\langle\mathcal{G}(g^1_k)\rangle$. Further, $\langle\mathcal{G}(g^1_k)\rangle\cong G_2$ and Proposition \ref{1} gives that $G^{\prime}$ is also a rigid graph, moreover, Theorem \ref{3} (2) gives that $fix(G^{\prime}\ast
G_2)=fix(G_2)$. Hence, $fix(G_1\ast G_2)=2fix(G_2)$.
\end{proof}

\begin{exm}Let $G_1$ be a rigid graph with a dominating vertex. If
$G_2\cong P_n$ with $n\geq 4$, then Theorem \ref{4} gives that  $fix(G_1\ast G_2)= 2$, because $fix(P_n)=1$. If
$G_2\cong C_n$ with $n\geq 5$, then Theorem \ref{4} gives that $fix(G_1\ast G_2)=4$, because $fix(C_n)=2$.
\end{exm}

For a vertex $x$ in a graph $G_1$, set $C(x)=\{u\in V(G_1) : N(u)=N(x)\}$ is called the \emph{equivalence class} of false twins of $x$ in $G_1$.
The next theorem, follows from Theorem \ref{twin} and the definition of co-normal product of graphs.
\begin{thm}\label{con}Let $G_1$ be a connected graph and $G_2$ be a graph such that $G^2_1,$ $G^2_2,$ $\ldots,$ $G^2_s$ are distinct equivalence classes of false twins and forms a partition of $V(G_2)$, with $|G^2_j|\geq 2$, for each $j$. If $G_1$ has no false twins, then $fix(G_1\ast G_2)=|V(G_1)|fix(G_2)$.
\end{thm}
\begin{proof}Since $G_1$ has no false twins, therefore, Theorem \ref{twin} yields that $(g^1_i, g^2_j), (g^1_k,g^2_l)\in V(G_1\ast G_2)$ are false twins if and only if $g^1_i=g^1_k$ and $g^2_j, g^2_l\in G^2_r$ for some $1\leq r\leq s$. We define set $\mathcal{G}_{ij}=\{g^1_i\}\times G^2_j$ for each $1\leq j\leq s$ and $g^1_i\in V(G_1)$ which is an equivalence class of false twins of $G_1\ast G_2$ such that $|\mathcal{G}_{ij}|\geq 2$ and $G_1\ast G_2$ has $|V(G_1)|\cdot s$ distinct such classes. Moreover, the collection $\mathcal{G}_{ij}$$'s$ forms a partition of $V(G_1\ast G_2)$ because $G^2_i$$'s$ gives a partition of $G_2$. Hence, $fix(G_1\ast G_2)=\sum_{g^1_i\in V(G_1)}\sum_{j=1}^{s}|\mathcal{G}_{ij}|- |V(G_1)|\cdot s=|V(G_1)||V(G_2)|-|V(G_1)|\cdot s$ implies that $fix(G_1\ast G_2)=|V(G_1)|fix(G_2)$.
\end{proof}

\begin{exm}If $G_1$ is a path graph of order $m$ and
$G_2$ be a null graph of order $n$, then Theorem \ref{con} gives that $fix(G_1\ast G_2)=m(n-1)$, because $fix(N_n)=n-1$.
\end{exm}

In the next two theorems, we give formulae for the fixing number of $G_1\ast G_2$, under certain conditions.
\begin{thm}In a graph $G_1$, let $D$ be the set of all dominating vertices with $|D|\geq 1$ and $G^1_1, G^1_2, \ldots, G^1_r$ with $r\geq 1$ be the distinct equivalence classes of false twins with $|G^1_i|\geq 2$ for each $1\leq i\leq r$. If the collection $D,G^1_1, G^1_2, \ldots, G^1_r$ gives a partition of $V(G_1)$ and $G_2$ has no false twins, then $fix(G_1\ast G_2)= |V(G_2)|(\sum\limits_{i=1}^{r}|G^1_i|-r)+|D|fix(G_2)$.
\end{thm}
\begin{proof}Let $G^\prime=\langle V(G_1)\setminus D\rangle$, then $G^1_1,G^1_2,\ldots,G^1_r$ give a partition of $V(G^\prime)$ and are distinct equivalence classes of false twins in $G^\prime$. Now for each $g^2_j\in V(G_2)$ and for each $G^1_i$, the set $\mathcal{G}_{ij}=G^1_i\times \{g^2_j\}$ is an equivalence class of false twins and there are $|V(G_2)|\cdot r$ such classes in $G^\prime\ast G_2$. Moreover, the collection of $\mathcal{G}_{ij}$$'s$ yields a partition of $V(G^\prime\ast G_2)$ implies that $fix(G^\prime\ast G_2)= |V(G_2)|(\sum_{i=1}^{r}|G^1_i|-r)$. Now, for each $g^1_i\in D$, Lemma \ref{2} gives that the vertices of $\mathcal{G}(g^1_i)$ are not fixed by any $x\in V(G_1\ast G_2)\setminus \mathcal{G}(g^1_i)$ and $\mathcal{G}(g^1_i)\cong G_2$ gives that $fix(\langle D\times V(G_2)\rangle)=|D|fix(G_2)$. Observation \ref{o2} extended to a set of dominating vertices gives that $G_1\ast G_2=\langle(V(G_1)\setminus D)\times V(G_2)\rangle+\langle D\times V(G_2)\rangle$$=$$G^\prime\ast G_2+\langle D\times V(G_2)\rangle$. Hence, $fix(G_1\ast G_2)=|V(G_2)|(\sum\limits_{i=1}^{r}|G^1_i|-r)+|D|fix(G_2)$.
\end{proof}
\begin{thm}Let $G_1$ and $G_2$ be two graphs such that $G_1$ has $r$
distinct equivalence classes $G^1_1, G^1_2,\ldots, G^1_r$ of false
twins and $G_2$ has $s$ distinct equivalence classes $G^2_1, G^2_2,\ldots, G^2_s$ of false twins. If $\cup_{i=1}^{r}G^1_i=V(G)$, $\cup_{j=1}^{s}G^2_j=V(G_2)$ and $|G^1_i|, |G^2_j|\geq 2$ for each
$i$ and $j$, then $fix(G_1\ast G_2)=|V(G_1)||V(G_2)|-rs$.
\end{thm}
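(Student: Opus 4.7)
The strategy is to identify the false twin equivalence classes of $G_1\ast G_2$ under the given hypothesis, and then read the fixing number off that structure.

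First I would note that the hypothesis $|G^1_i|, |G^2_j|\geq 2$ forces both $G_1$ and $G_2$ to have no dominating vertex: if $u\in V(G_1)$ were dominating, then $N(u)=V(G_1)\setminus\{u\}$ could not equal $N(v)$ for any $v\neq u$ (since $v\notin N(v)$), so $u$ would sit in a singleton false twin class, contradicting the hypothesis. With this ruled out, cases (3), (4), and (6) of Corollary \ref{g7} cannot occur, and the remaining cases (1), (2), (5) all describe pairs of false twins. Running through these three cases, one checks that any two distinct vertices in the same block $G^1_a\times G^2_b$ are false twins in $G_1\ast G_2$; conversely, any false twin pair $(g_i^1,g_j^2),(g_k^1,g_l^2)$ forces, coordinate-wise, either equality or a false twin relation in the respective factor, placing both vertices in a common block. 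Hence the $rs$ blocks $G^1_a\times G^2_b$ are exactly the false twin equivalence classes of $V(G_1\ast G_2)$, and they partition the vertex set.

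With this structural picture, the lower bound follows from the standard twin argument: the full symmetric group on any false twin class of size $m$ embeds into $Aut(G_1\ast G_2)$, so every fixing set must contain at least $m-1$ vertices of that class. Summing over the $rs$ blocks yields
\[
fix(G_1\ast G_2) \;\geq\; \sum_{a=1}^{r}\sum_{b=1}^{s}\bigl(|G^1_a|\,|G^2_b|-1\bigr) \;=\; |G_1|\,|G_2|-rs.
\]
For the matching upper bound I would take $F$ to consist of all but one vertex from each of the $rs$ blocks, so that $|F|=|G_1||G_2|-rs$. Any $\alpha\in Stab(F)$ permutes the false twin classes; since it already fixes at least one vertex of every block $B$ while distinct blocks are disjoint, $\alpha$ must send each block to itself, and bijectivity on $B$ then forces the sole unfixed vertex of $B$ to be fixed as well. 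Hence $\alpha=i_d$ and $F$ is a fixing set, giving equality.

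The main obstacle is the first step, namely pinning down the false twin partition of $V(G_1\ast G_2)$ as the product partition into the $G^1_a\times G^2_b$. The containment direction uses cases (1), (2), (5) of Corollary \ref{g7} directly, but the converse also requires the observation that cases (3), (4), (6) are unavailable because neither $G_1$ nor $G_2$ has a dominating vertex; this is the place where the hypothesis $|G^1_i|,|G^2_j|\geq 2$ is genuinely used. Once this combinatorial description is in hand, both bounds on $fix(G_1\ast G_2)$ are routine.
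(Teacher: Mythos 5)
Your proof is correct and follows essentially the same route as the paper: identify the blocks $G^1_a\times G^2_b$ as precisely the false twin equivalence classes of $G_1\ast G_2$, then obtain the lower bound from interchanges within each class and the upper bound by fixing all but one vertex per class. You supply details the paper leaves implicit (the converse direction of the twin characterization, the exclusion of the dominating-vertex cases, and the verification that the complement of a transversal is a fixing set), but the underlying argument is the same.
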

\begin{proof}Theorem \ref{twin} gives that for each $i$ and $j$, the set $G^1_i\times G^2_j$ is an
equivalence class of false twins in $G_1\ast G_2$, moreover, the
collection $G^1_i\times G^2_j$, $1\leq i\leq r$, $1\leq j\leq s$
gives a partition of $V(G_1\ast G_2)$ such that $|G^1_i\times G^2_j|\geq
4$. Hence, $fix(G_1\ast G_2)=|V(G_1)||V(G_2)|-rs$.
\end{proof}
The next corollary deals with the fixing number of the co-normal product of two multipartite graphs.
\begin{cor}For $G_1\cong K_{k_1, k_2,\ldots, k_r}$ and $G_2\cong K_{l_1,
l_2, \ldots, l_s}$, $fix(G_1\ast G_2)$ $= |V(G_1)|$ $|V(G_2)|-rs$.
\end{cor}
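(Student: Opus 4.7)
The plan is to reduce the corollary immediately to the preceding theorem by identifying the false-twin structure of complete multipartite graphs. First I would fix the standard description of $K_{k_1,k_2,\ldots,k_r}$: its vertex set partitions into $r$ independent sets $V_1,\ldots,V_r$ with $|V_i|=k_i$, and every edge joins vertices in distinct parts. From this it follows directly that two distinct vertices $u,v\in V_i$ are non-adjacent and satisfy $N(u)=N(v)=V\setminus V_i$, so they are false twins, while any two vertices in different parts are adjacent and hence cannot be false twins. Consequently the $r$ partition classes $V_1,\ldots,V_r$ are exactly the distinct equivalence classes of false twins of $G_1$. The same argument produces the $s$ partition classes $W_1,\ldots,W_s$ of $G_2\cong K_{l_1,\ldots,l_s}$ as its false-twin classes.

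Under the standing convention that the parts $k_i$ and $l_j$ are all at least $2$ (needed so that the false-twin classes have cardinality $\geq 2$, matching the hypothesis of the previous theorem), the factors $G_1$ and $G_2$ satisfy the assumptions of that theorem verbatim with exactly $r$ and $s$ false-twin classes respectively. Applying it gives
\[
fix(G_1\ast G_2)=|G_1||G_2|-rs,
\]
which is the claimed formula.

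The only real content beyond quoting the previous theorem is the observation about false-twin classes, so there is no serious obstacle; the main thing to be careful about is distinguishing false twins (non-adjacent, equal open neighborhoods) from true twins and verifying that no other pair of vertices of $K_{k_1,\ldots,k_r}$ is a false twin. If some $k_i=1$ (so $V_i$ is a singleton), that vertex would actually be a dominating vertex of $G_1$ and the theorem's hypothesis $|G^1_i|\geq 2$ would fail for that class; however, in that case such a singleton class simply does not count among the $r$ false-twin classes, and the formula must be interpreted accordingly. Under the natural reading $k_i,l_j\geq 2$, the corollary follows immediately.
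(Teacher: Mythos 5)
Your proof is correct and matches the paper's intent exactly: the paper states this corollary without proof as an immediate consequence of the preceding theorem, and your identification of the partition classes of a complete multipartite graph as precisely its equivalence classes of false twins is the only step needed. Your added caveat that the parts must satisfy $k_i, l_j\geq 2$ is a genuine and necessary refinement the paper omits --- if some part is a singleton, that vertex is a dominating vertex and forms a false-twin class of size $1$, violating the theorem's hypothesis, and the formula can fail (e.g.\ $K_{1,1}\ast K_{1,1}=K_4$ has fixing number $3$, not $0$).
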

In \cite{FD}, Erwin and Harary gave the following observation.
\begin{obs}\cite{FD}\label{o4}
	For a graph $G$, $fix(G)=1$ if and only if $G$ has an orbit of cardinality $|Aut(G)|$.
\end{obs}

\begin{figure}[h]
        \centerline
        {\includegraphics[width=14cm]{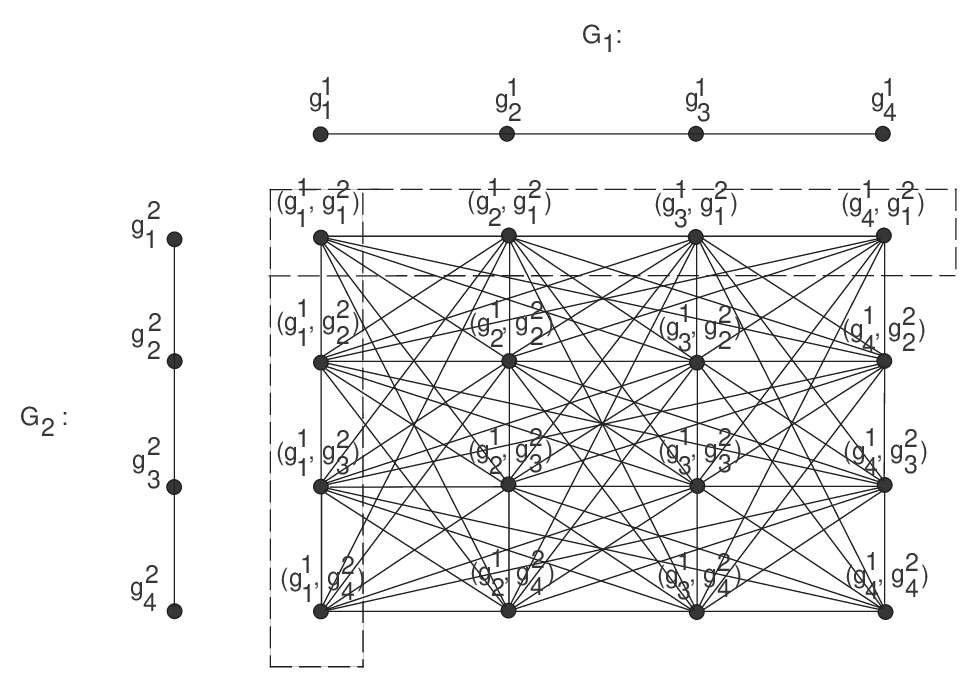}}
        \label{p}
        \caption{The co-normal product graph of $G_1=P_4$ and $G_2=P_4$.}\label{p}
\end{figure}
Note that the vertices $(g^1_1,g^2_1)$, $(g^1_1,g^2_4)$, $(g^1_4,g^2_1)$ and $(g^1_4,g^2_4)$ for the co-normal product graph $P_4\ast P_4$ shown in Figure 2, are similar. Also, set $\{(g^1_1,g^2_2)\}$ is a fixing set implies that $fix(P_4\ast P_4)=1$. In the next theorem, we give the fixing number of the co-normal product of two path graphs.

\begin{thm}Let $P_n$ be a path graph with $n$ vertices, then
for any two integers $s, t\geq 2$,
$$ fix(P_s\ast P_t) = \left\{
            \begin{array}{ll}
            1           &if\,\,\,\mbox{s, t$\geq$ 4},\\
            2            &if\,\,\,\mbox{s$=$2, t$\geq$ 4},\\
            3            &if\,\,\,\mbox{s$=$t$=$2 or s$=$2, t$=$
            3},\\
            5            &if\,\,\,\mbox{s$=$t$=$3},\\
            t+1           &if\,\,\,\mbox{s$=$3, t$\geq$ 4}.
            \end{array}
             \right.
$$
\end{thm}
\begin{proof}For $s, t\geq 4$, we discuss two cases:\\
\\
Case 1) Suppose $s\neq t$,
then $Aut(P_s\ast P_t)\cong S_2\oplus S_2$ and the four nodes of degree $t+s-1$ are similar so by Observation \ref{o4}, $fix(P_t\ast P_s)$=1.\\
\\
Case 2) Suppose $s=t$, then $Aut(P_s\ast P_t)$ is isomorphic to the dihedral group of order 8 and the eight nodes of degree $3s-2$ are similar which are adjacent to the four nodes of degree $2s-1$ (the corner nodes), so by Observation \ref{o4},
$fix(P_s\ast P_t)=1$.\\
\\
Suppose $s=2$ and $t\geq 4$, then $P_s$ has two dominating vertices and Lemma \ref{2} gives that $fix(P_s\ast P_t)=2$.\\
\\
Suppose $s=t=2$, then the order of $P_s\ast P_t$ is 4 and it is a complete graph. Hence, $fix(P_s\ast P_t)=3$.\\
\\
Suppose $s=2$ and $t=3$, then $P_s\ast
P_t$ has two equivalence classes of false twins with cardinality 2
and one class of true twins because $P_t$ also has a dominating
vertex. Hence, $fix(P_s\ast P_t)=3$.\\
\\
Suppose $s=t=3$, then $P_s\ast P_t$ has an equivalence class of false
twins with cardinality 4, two classes of false twins with
cardinality 2 and one dominating vertex. Hence, $fix(P_s\ast P_t)=5$.\\
\\
Suppose $s=3$ and $t\geq 4$, then $P_s\ast P_t$ has $t$
equivalence of cardinality 2. Since $P_s$ has a dominating vertex so
Lemma \ref{2} gives that $fix(P_s\ast P_t)=t+1$.
\end{proof}

In the next theorem, we give formulae for the fixing number of the co-normal product of two graphs, when $G_1$ is a graph without false twins and $G_2$ is a star graph.
\begin{thm}If $G_1$ is a graph without false twins
and $G_2\cong S_{n+1}$, then $fix(G_1\ast G_2)=|V(G_1)|fix(G_2)+fix(G_1)$.
\end{thm}
\begin{proof}Let $V(G_1)=\{g^1_1, g^1_2,\ldots, g^1_{m}\}$ and $V(G_2)=\{g^2_1,
g^2_2,\ldots, g^2_n, g^2_{n+1}\}$ and $g^2_{n+1}$ be the
dominating vertex in $G_2$. Let $F$ be a minimum fixing set for $G_1\ast G_2$. Theorem \ref{twin} gives that for each $g^1_i\in V(G_1)$, the set $\mathcal{G}_i=\{g^1_i\}\times (V(G_2)\setminus \{g^2_{n+1}\})$ is an equivalence class of false twins in $G_1\ast G_2$ which gives that $|F\cap \mathcal{G}_i|\geq n-1$ for each $1\leq i\leq m$. Lemma \ref{2} gives that the vertices of the set $\mathcal{G}(g^2_{n+1})$ are not fixed by any vertex in $V(G_1\ast G_2)\setminus \mathcal{G}(g^2_{n+1})$ and $\mathcal{G}(g^2_{n+1})\cong G_1$ yields that $|F\cap \mathcal{G}(g^2_{n+1})|\geq fix(G_1)$. Also, $\mathcal{G}(g^2_{n+1})\cap \mathcal{G}_i=\emptyset$ for each $i$ implies that $fix(G_1\ast G_2)\geq |V(G_1)|fix(G_2)+fix(G_1)$. Let $F_1$ be a minimum fixing set for $G_1$ and $F_2$ be a minimum fixing set for $G_2$. Consider the set $F=\cup_{i=1}^{m}(\{g^1_i\}\times F_2)\cup (F_1\times \{g^2_{n+1}\})$ such that $Stab(F)=Stab(F_1)\times Stab(F_2)=\{id_{G_1\ast G_2}\}$ yields that $F$ is a fixing set for $G_1\ast G_2$ and $fix(G_1\ast G_2)\leq |V(G_1)|fix(G_2)+fix(G_1)$. Hence, $fix(G_1\ast G_2)= |V(G_1)|fix(G_2)+fix(G_1)$.
\end{proof}

\begin{exm}Let $G_1\cong S_{m+1}$ and $G_2\cong S_{n+1}$ whereas $V(G_1)=\{g^1_1$$, g^1_2,$ $\ldots,$ $g^1_{m},$$ g^1_{m+1}\}$ and
$V(G_2)$ $=\{g^2_1,$ $ g^2_2, $ $\ldots,$$ g^2_{n}$,$g^2_{n+1}\}$ such that
$g^1_{m+1}$ is dominating vertex in $G_1$ and $g^2_{n+1}$ is
dominating in $G_2$. The equivalence classes of false twins in $G_1$
are $G^1_1=\{g_1, g_2, \ldots, g_{m}\}$,
$G^1_2=\{g_{m+1}\}$ and $G^2_1=\{g^2_1, g^2_2, \ldots,
g^2_{n}\}$, $G^2_2=\{g^2_{n+1}\}$ are equivalence classes of
false twins in $G_2$. The classes $G^1_1\times G^2_1$, $G^1_1\times
G^2_2$, $G^1_2\times G^2_1$ and $G^1_2\times G^2_2$ are equivalence
classes of false twins such that $|G^1_2\times G^2_2|=1$. Also,
$G^1_1\times G^2_2= \mathcal{G}(g^2_{n+1})\setminus
\{(g^1_{m+1},g^2_{n+1})\}$, $G^1_2\times G^2_1=
\mathcal{G}(g^1_{m+1})\setminus \{(g^1_{m+1},g^2_{n+1})\}$ and
$(g^1_{m+1},g^2_{n+1})$ is the dominating vertex in $G_1\ast G_2$. Using Lemma \ref{2}, we have $fix(G_1\ast G_2)=mn+fix(G_1)+fix(G_2)-1$.
\end{exm}


\bibliography{mybibfile}

\end{document}